\numberwithin{equation}{section}
\definecolor{darkred}{rgb}{0.5,0,0}
\definecolor{darkgreen}{rgb}{0,0.5,0}
\definecolor{darkblue}{rgb}{0,0,0.5}
\numberwithin{equation}{section}
\newtheorem{thm}{Theorem}[section]
\newtheorem{cor}[thm]{Corollary}
\newtheorem{prop}[thm]{Proposition}
\newtheorem{lemma}[thm]{Lemma}
\newtheorem{def-lemma}[thm]{Definition-Lemma}
\theoremstyle{definition}
\newtheorem{defn}[thm]{Definition}
\theoremstyle{remark}
\newtheorem{rem}[thm]{Remark}
\newtheorem{example}[thm]{Example}
\newcommand{\beq}{\begin{equation}}
\newcommand{\eeq}{\end{equation}}
\newcommand{\beqn}{\begin{equation*}}
\newcommand{\eeqn}{\end{equation*}}
\newcommand{\ov}{\overline}
\newcommand{\mb}{\mathbb}
\newcommand{\scrC}{\EuScript C}
\newcommand{\scrM}{\EuScript M}
\newcommand{\scrS}{\EuScript S}
\newcommand{\colim@}[2]{%
  \vtop{\m@th\ialign{##\cr
    \hfil$#1\operator@font colim$\hfil\cr
    \noalign{\nointerlineskip\kern1.5\ex@}#2\cr
    \noalign{\nointerlineskip\kern-\ex@}\cr}}%
}
\newcommand{\colim}{%
  \mathop{\mathpalette\colim@{\rightarrowfill@\textstyle}}\nmlimits@
}
\title[Equivariant formality]{Equivariant formality in complex-oriented theories}
\author{Shaoyun Bai}
\address{Department of Mathematics, Columbia University, New York, NY 10027, USA}
\email{sb4841@columbia.edu}
\author{Daniel Pomerleano}
\address{University of Massachusetts, Boston, 100 William T, Morrissey Blvd, Boston, MA 02125, USA}
\email{Daniel.Pomerleano@umb.edu}
\begin{document}

\begin{abstract}
Let $G$ be a product of unitary groups and let $(M,\omega)$ be a compact symplectic manifold with Hamiltonian $G$-action. We prove an equivariant formality result for any complex-oriented cohomology theory $\mathbb{E}^*$ (in particular, integral cohomology). This generalizes  the celebrated result of Atiyah--Bott--Kirwan for rational cohomology from the 1980s. The proof does not use classical ideas but instead relies on a recent cohomological splitting result of Abouzaid--McLean--Smith \cite{AMS} for Hamiltonian fibrations over $\mathbb{CP}^1.$ Moreover, we establish analogues of the ``localization" and ``injectivity to fixed points" theorems for certain cohomology theories studied by Hopkins--Kuhn--Ravenel in \cite{HKR}. As an application of these results, we establish a Goresky--Kottwitz--MacPherson theorem with Morava $K$-theory coefficients for Hamiltonian $T$-manifolds.
\end{abstract}

\maketitle

\section{Introduction}

In \cite{GKM}, Goresky, Kottwitz, and MacPherson introduced the notion of an equivariantly formal space. Let $G$ be a compact, connected Lie group, $T \subset G$ a maximal torus and $R$ a commutative ring. A $G$-space $M$ is called equivariantly formal over $R$ if the Leray spectral sequence
\begin{align} H^p(BG;H^q(M;R))=> H_G^{p+q}(M;R) \end{align}
degenerates. The main examples of equivariantly formal-spaces (c.f. \cite[\S 14.1]{GKM})  arise from the following celebrated result of Atiyah--Bott--Kirwan (\cite[\S 7]{Atiyah-Bott}, \cite[\S 5]{Kirwan}):

\begin{thm}\label{thm:AB} Let $(M,\omega)$ be a compact symplectic manifold equipped with a Hamiltonian $G$-action. Then $M$ is equivariantly formal over any $\mathbb{Q}$-algebra $R$. \end{thm}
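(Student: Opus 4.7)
The plan is to follow the classical Morse-theoretic strategy of Atiyah--Bott and Kirwan, applied to the norm-square of the moment map. First I would reduce to the case where $G = T$ is a torus: for $G$ compact connected with maximal torus $T$ and Weyl group $W$, in characteristic zero one has a canonical identification $H_G^*(M;R) \cong H_T^*(M;R)^W$, while $H^*(BT;R)$ is a free $H^*(BG;R)$-module of rank $|W|$; together these imply that freeness of $H_T^*(M;R)$ over $H^*(BT;R)$ forces freeness of $H_G^*(M;R)$ over $H^*(BG;R)$, so I assume $G = T$. Fix an invariant inner product on $\mathfrak{t}^*$ and a moment map $\mu : M \to \mathfrak{t}^*$, and set $f := \|\mu\|^2$. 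Kirwan's foundational observation is that $f$ is a $T$-invariant Morse--Bott function whose critical set is a finite disjoint union of closed $T$-invariant submanifolds $C_\beta$, indexed by vectors $\beta \in \mathfrak{t}^*$, with $C_0 = \mu^{-1}(0)$ the minimum stratum.

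Next I would analyze $H_T^*(M;R)$ via the equivariant Morse--Bott spectral sequence associated to the sublevel-set filtration by $\{f \le c\}$. Crossing the critical value $c_\beta$ changes equivariant cohomology according to the equivariant Thom--Gysin long exact sequence of the negative normal bundle $\nu_\beta^-$, with connecting map given by cup product with the equivariant Euler class $e_T(\nu_\beta^-)$. The central claim, Kirwan's equivariant perfection lemma, is that this long exact sequence breaks into short exact sequences
\begin{equation*}
0 \to H_T^{\,* - \lambda_\beta}(C_\beta;R) \xrightarrow{\,\cup\, e_T(\nu_\beta^-)\,} H_T^*(\{f \le c_\beta + \epsilon\};R) \to H_T^*(\{f \le c_\beta - \epsilon\};R) \to 0,
\end{equation*}
where $\lambda_\beta = \mathrm{rk}\,\nu_\beta^-$. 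This is equivalent to $e_T(\nu_\beta^-)$ being a non-zero-divisor in $H_T^*(C_\beta;R)$, which follows from the geometric fact that $T$ acts fiberwise on $\nu_\beta^-$ with weights lying strictly on the positive side of the hyperplane orthogonal to $\beta$; here the rational hypothesis is essential, since it guarantees that the resulting product of weight classes is a regular element of $H^*(BT;R)$.

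An induction on $\dim M$, applied to each $C_\beta$ viewed as (essentially) a Hamiltonian $T/T_\beta$-manifold, shows that every $H_T^*(C_\beta;R)$ is a free $H^*(BT;R)$-module. Combining this with the short exact sequences above and the equivariant Thom isomorphism, one inductively concludes that $H_T^*(M;R)$ is free over $H^*(BT;R)$, which is precisely the asserted equivariant formality.

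The main obstacle in this plan is the structural result on the critical set of $f = \|\mu\|^2$: although $f$ is smooth and $T$-invariant, its critical set is a priori only a singular subvariety, and identifying it with a disjoint union of smooth $T$-invariant submanifolds $C_\beta$ with explicitly described negative normal bundles of the required weight type requires a delicate local analysis near each critical orbit, using the equivariant normal form for Hamiltonian $T$-actions together with the convexity properties of $\mu$. Once this structural result and the perfection lemma are in hand, the remainder of the argument is essentially formal.
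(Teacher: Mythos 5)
There is a genuine gap, and it lies in your choice of Morse function. You run the argument with $f=\|\mu\|^2$, but equivariant perfection of $\|\mu\|^2$ does not yield equivariant formality of $M$: the minimum stratum of $f$ is $C_0=\mu^{-1}(0)$, and $H_T^*(\mu^{-1}(0);R)$ is essentially never a free $H^*(BT;R)$-module (for a free action it is $H^*(\mu^{-1}(0)/T;R)$, which is torsion over $H^*(BT;R)$; already for the standard $S^1$-action on $S^2$ with $0$ a regular value, the minimum stratum is a free orbit and contributes $H^*(pt)$). So your inductive claim that ``every $H_T^*(C_\beta;R)$ is a free $H^*(BT;R)$-module'' fails at the base of the induction, and the short exact sequences you write down then only give a filtration of $H_T^*(M)$ with non-free graded pieces, from which freeness of $H_T^*(M)$ cannot be concluded. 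What the $\|\mu\|^2$-argument actually proves is Kirwan surjectivity onto $H_T^*(\mu^{-1}(0))$, not formality. In addition, $\|\mu\|^2$ is not Morse--Bott in general; Kirwan's resolution is not to show the critical set is smooth (it need not be) but to introduce the weaker notion of minimally degenerate functions, so the ``delicate local analysis'' you defer is not merely technical bookkeeping.

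The classical proof you are aiming for — and the one the paper points to, since it cites Atiyah--Bott and Kirwan rather than reproving the statement — uses instead a single component $\mu^{\xi}=\langle\mu,\xi\rangle$ of the moment map for a generic one-parameter subgroup $\xi$. This \emph{is} Morse--Bott, its critical set is $M^T$ (a union of closed submanifolds on which $T$ acts trivially), so each critical stratum contributes $H^*(C)\otimes_R H^*(BT;R)$, which is manifestly free; the Atiyah--Bott lemma (valid over $\mathbb{Q}$ because the negative normal bundles have no nonzero fixed vectors, so the relevant equivariant Euler classes are non-zero-divisors) gives perfection, and then your extension/splitting argument does prove freeness of $H_T^*(M;R)$, hence formality, with your Weyl-invariant reduction handling general $G$. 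Note also that the paper's own generalization (Theorem \ref{thm:general}) deliberately avoids this entire Morse-theoretic circle of ideas, deducing formality from c-splitting of Hamiltonian fibrations over Grassmannians via the Abouzaid--McLean--Smith theorem, precisely because the Atiyah--Bott lemma breaks down beyond rational coefficients.
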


The proof of Theorem \ref{thm:AB} easily reduces to the case $G=T$ via taking Weyl invariants. In the torus-equivariant case, the idea is to argue that the moment map for a generic one-parameter subgroup of $T$ is equivariantly perfect. Theorem \ref{thm:AB} naturally leads one to ask what happens with integral coefficients. The proof of this claim in these classical references relies on the so-called Atiyah--Bott lemma (\cite[Proposition 13.4]{AtiyahBottYM},\cite[Lemma 2.18]{Kirwan}), whose proof may fail when $R=\mathbb{Z}$ (or even $R=\mathbb{F}_p)$ and the cohomology of the critical sets have torsion (see e.g. \cite[\S 13]{AtiyahBottYM}, \cite[Remark 2.4]{Harada}, \cite[\S 4]{Tolman-Weitsman}). The main result of this note is the following somewhat surprising generalization of Theorem \ref{thm:AB} to any complex-oriented cohomology theory $\mathbb{E}^*$, including integral cohomology as a special case:

\begin{thm} \label{thm:general} Let $(M,\omega)$ be a compact symplectic manifold with Hamiltonian $G$-action. \begin{enumerate} \item Suppose $G$ is a product of unitary groups. For any complex oriented cohomology theory $\mathbb{E}^*$, the Leray spectral sequence \begin{align}\label{eq:LeraySerreE} H^p(BG;\mathbb{E}^q(M))=> \mathbb{E}_G^{p+q}(M) \end{align} degenerates. Moreover, there is a splitting:
 \begin{align} \label{eq:strongsplitting} \mathbb{E}^*_G(M) \cong \mathbb{E}^*(M \times BG) \end{align} of $\mathbb{E}^*(BG)$ modules.\footnote{Contrary to the usual convention in equivariant homotopy theory, we use the notation $\mathbb{E}_G^*(M)$ to denote the Borel-equivariant cohomology.}  \item For general $G$, suppose that $\mathbb{E}^*(pt)$ is an algebra over a field $\mathbf{k}$ for which the natural map  \begin{align} \label{eq:GT} H^*(BG;\mathbf{k}) \to H^*(BT;\mathbf{k}) \end{align} is injective. Then the Leray spectral sequence \eqref{eq:LeraySerreE} degenerates.
 \end{enumerate}
\end{thm}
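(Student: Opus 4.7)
The plan is to prove part (1) by reducing to the case of a torus $G = T$ and then applying the AMS splitting on an iterated $\mathbb{CP}^1$-bundle resolution of a finite-dimensional approximation to $BT$; part (2) will then follow by comparing Leray spectral sequences. For the reduction from a product $G$ of unitary groups to a maximal torus $T$, observe that the fiber $G/T$ of $BT \to BG$ is a product of complete flag varieties, with Schubert cell decompositions concentrated in even degrees. Hence $\mathbb{E}^*(G/T)$ is a free $\mathbb{E}^*(pt)$-module in even degrees for any complex-oriented $\mathbb{E}^*$, so Leray--Hirsch applies both to $BT \to BG$ and to the parallel fibration $M_T \to M_G$, giving
\[ \mathbb{E}^*(BT) \cong \mathbb{E}^*(BG) \otimes_{\mathbb{E}^*(pt)} \mathbb{E}^*(G/T), \qquad \mathbb{E}^*(M_T) \cong \mathbb{E}^*(M_G) \otimes_{\mathbb{E}^*(pt)} \mathbb{E}^*(G/T). \]
Granted the splitting $\mathbb{E}^*(M_T) \cong \mathbb{E}^*(M) \otimes \mathbb{E}^*(BT)$ for $T$, cancellation of the common factor $\mathbb{E}^*(G/T)$ produces the splitting for $G$.

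\textbf{The torus case via Bott--Samelson.} For $G = T = (S^1)^k$, approximate $BT$ by $B_N T := (\mathbb{CP}^N)^k$, so that the $T$-Borel construction is modeled by the Hamiltonian fibration $M \times_T (S^{2N+1})^k \to B_N T$; a limit $N \to \infty$ (with compatible splittings so the Milnor $\lim^1$ term vanishes) then yields the statement over $BT$. At fixed $N$, take a Bott--Samelson resolution $p : \widetilde{\mathrm{Fl}} \to B_N T$ realizing the target as a quotient of an iterated $\mathbb{CP}^1$-bundle, obtained as a product of Bott--Samelson resolutions for each $\mathbb{CP}^N$ factor. Pulling the Hamiltonian fibration back along $p$ gives a Hamiltonian fibration over $\widetilde{\mathrm{Fl}}$, to which one applies AMS inductively at each stage of the tower---each stage being a family of Hamiltonian fibrations over $\mathbb{CP}^1$---to obtain a cohomological splitting over $\widetilde{\mathrm{Fl}}$. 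The fiber of $p$ is a product of smaller flag varieties with only even-degree $\mathbb{E}^*$-cohomology, so Leray--Hirsch applies to $p$ and to its pullback, and comparing the two decompositions allows cancellation of the common flag factor and extracts the required splitting over $B_N T$.

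\textbf{Deducing part (2).} For general $G$, apply part (1) to a maximal torus $T \subset G$ (itself a product of $U(1)$'s) to obtain degeneration of the Leray spectral sequence for $M_T \to BT$. The pullback square comparing $M_T \to BT$ with $M_G \to BG$ induces a morphism of Leray spectral sequences. The injectivity hypothesis on $H^*(BG;\mathbf{k}) \to H^*(BT;\mathbf{k})$, combined with $\mathbf{k}$-linearity, promotes to injectivity $H^p(BG; \mathbb{E}^q(M)) \hookrightarrow H^p(BT; \mathbb{E}^q(M))$ on $E_2$-pages for all $p, q$. Since all higher differentials vanish on the lower spectral sequence by part (1), naturality forces them to vanish on the upper one as well, so the latter degenerates as required.

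\textbf{Main obstacle.} The most delicate step is making the inductive use of AMS along the Bott--Samelson tower rigorous. AMS delivers a cohomological splitting but not a canonical class, so one must verify that splittings chosen at successive levels of the tower can be assembled into a global Leray--Hirsch decomposition over $\widetilde{\mathrm{Fl}}$ compatible with the $\mathbb{CP}^1$-bundle structure. A family version, or a sufficiently functorial naturality statement for AMS, would suffice; alternatively, one could try a direct induction on $n$ using the cofiber sequence $\mathbb{CP}^{n-1} \hookrightarrow \mathbb{CP}^n$ and the long exact sequence of the pair $(M \times_{S^1} S^{2n+1}, M \times_{S^1} S^{2n-1})$, where the core task is to show that the connecting homomorphism vanishes. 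Either way, extending AMS beyond a single $\mathbb{CP}^1$ base is the technical heart of the proof.
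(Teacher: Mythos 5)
Your part (2) and the general skeleton of part (1) (finite-dimensional approximations of the classifying space, iterated $\mathbb{CP}^1$-towers resolving the base, AMS as the geometric input, compatible splittings and a Mittag--Leffler argument in the limit) are in the spirit of the paper, but both places where you invoke ``cancellation of a common tensor factor'' are genuine gaps. First, the reduction from $G$ to $T$: from $\mathbb{E}^*(M_T)\cong\mathbb{E}^*(M_G)\otimes_{\mathbb{E}^*(pt)}\mathbb{E}^*(G/T)$ and $\mathbb{E}^*(M_T)\cong\mathbb{E}^*(M)\otimes\mathbb{E}^*(BT)$ you cannot cancel $\mathbb{E}^*(G/T)$; cancellation of tensor factors of modules is not a valid operation, and what the theorem requires is an actual wrong-way map $s:\mathbb{E}^*(M)\to\mathbb{E}^*(M_G)$ with $i^*\circ s=\operatorname{id}$. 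The only natural candidate, the Gysin transfer along $M_T\to M_G$, satisfies $\pi_*\pi^*=(\cdot)\,\pi_*(1)$, and $\pi_*(1)$ is the $\mathbb{E}$-theoretic Euler characteristic of $G/T$, which is invertible rationally (it is $|W|$) but not for a general complex-oriented theory (e.g.\ $H\mathbb{F}_p$ or Morava $K$-theory at small primes) --- this is precisely why the classical Atiyah--Bott reduction to the maximal torus breaks and why the paper never passes through $BT$ for part (1), working instead directly over the approximations $BG_n$ by products of Grassmannians (Lemma \ref{lem:cohsplit}). Note also that the degeneration half of part (1), which your naturality argument does recover integrally (since $H^*(BT;\mathbb{Z})$ is free over $H^*(BG;\mathbb{Z})$), does not imply the module splitting \eqref{eq:strongsplitting}: when the coefficients are not a field, equivariant formality and the splitting are genuinely different statements.

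Second, the descent from the Bott--Samelson space to $B_NT$ (or to a Grassmannian) is misstated: a Bott--Samelson resolution is a birational morphism, generically one-to-one with jumping fibers, not a fiber bundle with flag-variety fibers, so Leray--Hirsch does not apply to $p$ at all, and even if it did, ``cancelling'' the fiber factor would run into the same problem as above. The correct mechanism (Lemma \ref{lem:blowups1}) is to push a c-splitting $\tilde s$ over the resolution forward along the Umk\"ehr map $\tau^P_*$ and use base-change compatibility of Gysin maps over the open locus where the birational map is an isomorphism, with the reference fiber chosen over that locus, giving $i^*\circ\tau^P_*\circ\tilde s=\operatorname{id}$; the degree-one property of the birational map is what replaces the non-invertible Euler characteristic. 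Finally, the ``main obstacle'' you flag --- assembling AMS splittings along the $\mathbb{CP}^1$-tower --- is real but has a soft resolution you are missing (Lemma \ref{lem:tower1}): given a Hamiltonian bundle over the total space of a stage, restrict it to the fiber and split there by AMS, then compose with a splitting of the composite bundle over the base (composites of Hamiltonian fibrations over simply connected bases are again Hamiltonian); no compatibility between independently chosen splittings is needed, and no family or functorial version of AMS is required.
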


Specializing to the case of classical cohomology implies the following result, which already appears to be new:

\begin{thm}\label{thm:cohomology} Let $(M,\omega)$ be a compact symplectic manifold with Hamiltonian $G$-action. \begin{enumerate} \item If $G$ is a product of unitary groups, then $M$ is equivariantly formal over any commutative ring $R$. In fact, there is a (non-canonical) isomorphism of $H^*(BG;R)$-modules \begin{align} \label{eq:splittingintrocohR} H_G^*(M;R) \cong H^*(M;R)\otimes_R H_G^*(pt;R). \end{align} \item For general $G$, let $\mathbf{k}$ be a field such that the natural map  \begin{align} H^*(BG;\mathbf{k}) \to H^*(BT;\mathbf{k}) \end{align} is injective and let $R$ be any $\mathbf{k}$-algebra. Then $M$ is equivariantly formal over $R.$ \end{enumerate}  \end{thm}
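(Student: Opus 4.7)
The plan is to deduce Theorem \ref{thm:cohomology} directly from Theorem \ref{thm:general} by specializing the complex-oriented cohomology theory $\mathbb{E}^*$ to ordinary singular cohomology with coefficients in $R$, i.e.\ the Eilenberg--MacLane spectrum $HR$. The preliminary point is that $HR$ is complex-oriented for any commutative ring $R$: a generator of $H^2(\mathbb{CP}^\infty;R) \cong R$ furnishes the orientation class. Once this is in hand, both parts of the theorem become essentially bookkeeping.

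For part (1), applying Theorem \ref{thm:general}(1) to $\mathbb{E}^* = HR$ yields both the degeneration of the Leray spectral sequence \eqref{eq:LeraySerreE} -- which is the definition of equivariant formality -- and the $H^*(BG;R)$-module isomorphism
\[
H^*_G(M;R) \cong H^*(M \times BG;R).
\]
To convert this into the claimed splitting \eqref{eq:splittingintrocohR}, we invoke the Künneth theorem. Since $G$ is a product of unitary groups, $H^*(BG;R)$ is a polynomial algebra on the universal Chern classes, hence free as an $R$-module. Thus the $\operatorname{Tor}$ terms in the Künneth exact sequence vanish, giving $H^*(M \times BG;R) \cong H^*(M;R) \otimes_R H^*(BG;R) = H^*(M;R) \otimes_R H_G^*(pt;R)$.

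For part (2), we apply Theorem \ref{thm:general}(2) to the same $\mathbb{E}^* = HR$. The coefficient ring $\mathbb{E}^*(pt) = R$ is by hypothesis a $\mathbf{k}$-algebra, and the injectivity condition on $H^*(BG;\mathbf{k}) \to H^*(BT;\mathbf{k})$ is precisely the one assumed. Theorem \ref{thm:general}(2) then delivers degeneration of the Leray spectral sequence with $R$-coefficients, which is exactly equivariant formality over $R$.

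In this reduction there is essentially no obstacle: Theorem \ref{thm:general} does all the substantive work, and the only extra input is the classical freeness of $H^*(BG;R)$ for a product of unitary groups. The genuine difficulty -- absent from the classical Atiyah--Bott--Kirwan argument -- lies upstream, in establishing Theorem \ref{thm:general} itself via the Abouzaid--McLean--Smith splitting \cite{AMS}; the passage from there to ordinary cohomology with arbitrary coefficients is then formal.
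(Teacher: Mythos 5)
Your proposal is correct and is essentially the paper's own route: the paper obtains Theorem \ref{thm:cohomology} precisely by specializing Theorem \ref{thm:general} to ordinary cohomology $HR$ (which is complex-oriented for any commutative ring $R$), with the splitting \eqref{eq:splittingintrocohR} coming from $H^*_G(M;R)\cong H^*(M\times BG;R)$ plus the K\"unneth isomorphism, valid since $H^*(BG;R)$ is free of finite type. Your treatment of part (2) likewise matches the paper's intended specialization, so there is nothing to add.
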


\begin{rem}
As far as the authors know, for $R = \mathbb{Z}$, the sharpest known results in the literature along the lines of Theorem \ref{thm:cohomology} (1) for a general Hamiltonian $G$-manifold $M$ were: 
\begin{itemize}
\item the short exact sequence (not proven to be split) for cohomology groups in degree $2$ (cf. \cite[Theorem A.3]{Harada})
\begin{equation}
    0 \longrightarrow H^2_{G}(pt;\mathbb{Z}) \otimes_{\mathbb{Z}} H^0(M;\mathbb{Z}) \longrightarrow H^2_G(M;\mathbb{Z}) \longrightarrow H^2(M;\mathbb{Z}) \longrightarrow 0.
\end{equation}
\item for $G$ a torus, equivariant formality over $\mathbb{Z}$ was known when all the isotropy groups are \emph{connected} (cf. \cite[Theorem 5.1]{franz-puppe}).
\end{itemize}
\end{rem}

\begin{rem}
 When $R$ is not a field, the splitting from \eqref{eq:splittingintrocohR} does not follow from equivariant formality; see \cite[Example 5.2]{franz-puppe} for a simple example of a space which is equivariantly formal but for which no splitting of this form exists. \end{rem} 
 \begin{rem} For semi-simple Lie groups, Theorem \ref{thm:cohomology} (2) is sharp in the following sense. The cohomology $H^*(BT,\mathbf{k})$ is isomorphic to $H_G^*(G/T,\mathbf{k})$, where $G$ acts by left-translation. This action is Hamiltonian for an appropriate symplectic form on the flag manifold $G/T$, and so if \eqref{eq:GT} is not injective, $G/T$ is not equivariantly formal over $\mathbf{k}.$   \end{rem}

\begin{example} We illustrate Theorem \ref{thm:cohomology} (2) for some classical groups:  \begin{itemize} \item If $G=U(n),SU(n),$ or $Sp(2n)$, then the result holds over any field $\mathbf{k}$; see e.g. \cite[Theorem 2.1]{toda1987} for the calculation of $H^*(BG;\mathbf{k})$ for classical groups $G$.  \item If $G=SO(n)$, then the result holds for any field $\mathbf{k}$ with $\operatorname{char}(\mathbf{k}) \neq 2.$ \item If $G=G_2$, then the result holds provided $\operatorname{char}(\mathbf{k}) \neq 2$ (\cite[Theorem 2.19]{Akbulut}). \end{itemize} \end{example}

Theorem \ref{thm:cohomology} may be unexpected as many of the other well-known consequences of the Atiyah--Bott lemma are known to fail with integral coefficients (or finite field coefficients). For example, it is easy to construct examples of Hamiltonian $T$-manifolds $M$ for which the restriction to the fixed point locus $j^*: H_T^*(M,\mathbb{Z}) \to H_T^*(M^{T},\mathbb{Z})$ fails to be injective. Indeed, we do not know how to prove Theorem \ref{thm:general} (or even Theorem \ref{thm:cohomology}) using this circle of ideas. 
 
 Instead, we deduce Theorem \ref{thm:general} from a more general result in symplectic topology. Recall that a locally trivial fiber bundle $M \to P \to B$ with fiber a closed symplectic manifold $(M,\omega)$ is said to be Hamiltonian if its structure group can be reduced to the group of Hamiltonian symplectomorphisms $\operatorname{Ham}(M,\omega)$. We prove the following cohomological splitting result for Hamiltonian fibrations:

 \begin{prop} \label{prop:generalized} Let $M \to P \to B$ be Hamiltonian fibration over a base $B$ which is a product of complex Grassmannians. Then there is a non-canonical isomorphism  of $\mathbb{E}^*(B)$ modules \begin{align} \mathbb{E}^*(P) \cong \mathbb{E}^*(M)\otimes_{\mathbb{E}^{*}(pt)} \mathbb{E}^*(B) \end{align} \end{prop}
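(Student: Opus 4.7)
The plan is to reduce the proposition to iterated applications of the Abouzaid--McLean--Smith (AMS) splitting for Hamiltonian fibrations over $\mathbb{CP}^1$ from \cite{AMS}. The reduction proceeds in two stages. First, I would reduce to the case of a single Grassmannian by induction on the number of factors: given a Hamiltonian fibration $P \to B_1 \times B_2$ with each $B_i$ a product of Grassmannians, restricting to $P|_{B_1 \times \{b_2\}} \to B_1$ gives a Hamiltonian fibration to which the inductive splitting applies, and one reassembles a global splitting over $B_1 \times B_2$ via a Leray--Hirsch/K\"unneth style argument. This reassembly exploits the freeness of $\mathbb{E}^*(\text{Grassmannian})$ as an $\mathbb{E}^*(pt)$-module.

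For the core case $B = Gr(k,n)$, I would use its Schubert CW decomposition, all of whose cells have even real dimension. This gives a filtration by Schubert subvarieties $\emptyset = X_0 \subset X_1 \subset \cdots \subset X_N = Gr(k,n)$ with $X_i = X_{i-1} \cup e^{2d_i}$, and I would induct on $i$ assuming $\mathbb{E}^*(P|_{X_{i-1}}) \cong \mathbb{E}^*(M) \otimes_{\mathbb{E}^*(pt)} \mathbb{E}^*(X_{i-1})$ as $\mathbb{E}^*(X_{i-1})$-modules. The key technical input is that each Schubert variety $\overline{X}_{w_i}$ admits a Bott--Samelson resolution $\pi_i \colon \widetilde{X}_{w_i} \to \overline{X}_{w_i}$ realized as an iterated tower of $\mathbb{CP}^1$-bundles starting from a point. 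Applying AMS at each stage of this tower (in a fiberwise/parameterized form, propagated via the Leray--Serre spectral sequence for the $\mathbb{CP}^1$-bundle) yields a splitting of $\pi_i^* P$ over $\widetilde{X}_{w_i}$.

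The main obstacle is the descent along $\pi_i$ and the coherent gluing with the splitting over $X_{i-1}$. For the descent, $\pi_i^*$ is surjective on $\mathbb{E}^*$-cohomology by the iterated $\mathbb{CP}^1$-bundle structure together with birationality of $\pi_i$, and one would aim to realize $\mathbb{E}^*(\overline{X}_{w_i})$ as a canonical direct summand of $\mathbb{E}^*(\widetilde{X}_{w_i})$ that respects the splitting. For the gluing, a Mayer--Vietoris argument along the intersection $\overline{X}_{w_i} \cap X_{i-1}$, which is a union of smaller Schubert varieties handled by the induction hypothesis, patches the two local splittings. The most delicate point is ensuring compatibility of the splittings across all stages of the induction; the necessary vanishing of the relevant connecting homomorphisms should follow from the freeness of $\mathbb{E}^*$ applied to Schubert cells and spheres, but executing this carefully through the induction is where the real work lies.
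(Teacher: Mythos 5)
Your overall toolkit (AMS splitting over $\mathbb{CP}^1$, Bott--Samelson towers of $\mathbb{CP}^1$-bundles, a Leray--Hirsch step using evenness/freeness of $\mathbb{E}^*$ of the base) is the right one, but the way you deploy it contains genuine gaps at exactly the points you flag as ``where the real work lies.'' First, your descent step goes from the Bott--Samelson resolution $\widetilde{X}_{w_i}$ to the \emph{singular} Schubert variety $\overline{X}_{w_i}$. For a general complex-oriented theory there is no Umk\"ehr/Gysin map along $\pi_i$ when the target is singular, and birationality plus the tower structure does not by itself give surjectivity of $\pi_i^*$ on $\mathbb{E}^*$, let alone a splitting of $\pi_i^*$ compatible with the fibration splitting (the ordinary-cohomology argument via pavings does not transfer formally). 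More importantly, what has to be descended is the c-splitting of the \emph{total space} $\pi_i^*P \to P|_{\overline{X}_{w_i}}$, and the only known mechanism is a wrong-way map on total spaces together with base-change compatibility of Gysin maps over the locus where $\pi_i$ is an isomorphism --- which requires both source and target to be smooth (stably complex) manifolds. Second, the Mayer--Vietoris gluing of splittings along $\overline{X}_{w_i}\cap X_{i-1}$ requires the local splittings to agree on the overlap; constructing a global splitting from local ones is the hard direction and is not supplied by freeness or vanishing of connecting maps alone. (Your reduction to a single Grassmannian is also under-specified: restricting to $P|_{B_1\times\{b_2\}}$ and invoking K\"unneth does not produce a splitting of $\mathbb{E}^*(M)\to\mathbb{E}^*(P)$; what is needed is the composite-fibration argument, using that a composite of Hamiltonian fibrations over a simply connected base is again Hamiltonian.)

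The paper's route avoids both problems: it never stratifies the Grassmannian. The single Schubert variety $Z_w$ for the permutation $w(i)=i+k\ (\mathrm{mod}\ n)$ maps birationally onto $\mathrm{Gr}(k,n)$ itself, so a Bott--Samelson resolution of $Z_w$ is an iterated $\mathbb{CP}^1$-bundle tower mapping birationally onto the \emph{smooth} Grassmannian. One then shows (i) c-splitness propagates up iterated Hamiltonian fibrations with c-split fiber and simply connected c-split base (the composite-bundle trick of Lalonde--McDuff), so the Bott--Samelson variety is Hamiltonian c-split by AMS plus induction; and (ii) c-splitness descends along a birational morphism $\tau^B:\widetilde{B}\to B$ of smooth projective varieties, by setting $s=\tau^P_*\circ\widetilde{s}$ and using the base-change identity $i^*\circ\tau^P_*=\widetilde{i}^{\,*}$ coming from the Cartesian square over the isomorphism locus. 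A single application of (ii) lands on $\mathrm{Gr}(k,n)$, products are handled by (i) again, and the module isomorphism follows from a Leray--Hirsch argument for even-cell bases. If you want to salvage your outline, replace the stratification induction by this one-shot birational descent to the smooth Grassmannian; otherwise you must build $\mathbb{E}$-theoretic pushforwards for (resolutions of) singular Schubert varieties and solve the splitting-compatibility problem, neither of which is routine.
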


The case where $B=\mathbb{CP}^1$ is an important recent result of Abouzaid--McLean--Smith \cite{AMS} (see also \cite{Bai-Xu} for a more direct argument in the case of integral cohomology). Their argument is based on ``hard" pseudo-holomorphic curve methods first introduced by \cite{LMP}. We are able to formally ``bootstrap" this result to prove Proposition \ref{prop:generalized} by sharpening the ``soft" topological approach to cohomological splitting of Hamiltonian fibrations by Lalonde--McDuff \cite{McDuff-Lalonde}.  

\begin{rem} In the introduction to their paper \cite{AMS}, the authors suggest that one could use the arguments of \cite{McDuff-Lalonde} to prove Proposition \ref{prop:generalized} for $B$ a product of projective spaces and cohomology with coefficients in any \emph{field}. The goal of this note is to carry out and considerably generalize this idea (by establishing cohomological splitting for Hamiltonian fibrations over more general $B$ and with coefficients in any complex-oriented cohomology theory) as well as to make the connection to classical concepts in equivariant symplectic geometry.  \end{rem}

To finish this note, we include two applications of our equivariant formality result. Our applications both concern a class of cohomology theories studied by Hopkins--Kuhn--Ravenel in \cite{HKR} (cf. Definition \ref{defn:HKR}). For the first application, we  establish an Atiyah--Bott style localization theorem (Lemma \ref{lem:localization}) for HKR theories which are futhermore integral and of finite Krull dimension in the sense of Definition \ref{defn:HRKfinite}. Combining this localization theorem with our equivariant formality result, we deduce the following:

\begin{thm}\label{thm:highlight}
Let $(M,\omega)$ be a compact symplectic manifold with Hamiltonian $T$-action. Let $\mathbb{E}^*$ be an HKR cohomology theory which is integral and of finite Krull dimension. Suppose that $\mathbb{E}^*(M)$ is free as a module over the ground ring $\mathbb{E}_*$. Then the restriction map on equivariant cohomology \begin{align} j^*:\mathbb{E}_T^*(M) \to \mathbb{E}_T^*(M^{T})\end{align} is injective. In particular, if $\mathbb{E}_*$ is a graded field, $j^*$ is injective for any compact symplectic manifold $M$ with Hamiltonian $T$-action.\end{thm}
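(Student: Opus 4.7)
The plan is to combine the equivariant formality result (Theorem \ref{thm:general} (1)) with the HKR localization theorem (Lemma \ref{lem:localization}) via the classical freeness-plus-localization argument: establish that $\mathbb{E}_T^*(M)$ is torsion-free over the domain $\mathbb{E}^*(BT)$, then apply localization to conclude that $j^*$ has trivial kernel.

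First, I would invoke Theorem \ref{thm:general} (1) for $G = T$ to obtain an isomorphism of $\mathbb{E}^*(BT)$-modules
\begin{align*}
\mathbb{E}_T^*(M) \;\cong\; \mathbb{E}^*(M) \otimes_{\mathbb{E}_*} \mathbb{E}^*(BT).
\end{align*}
The complex orientation on $\mathbb{E}^*$ identifies $\mathbb{E}^*(BT)$ with the formal power series ring $\mathbb{E}_*[[x_1, \ldots, x_r]]$, where $r$ is the rank of $T$; combined with the hypothesis that $\mathbb{E}^*(M)$ is free over $\mathbb{E}_*$, this exhibits $\mathbb{E}_T^*(M)$ as a free $\mathbb{E}^*(BT)$-module. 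Since $\mathbb{E}^*$ is integral, $\mathbb{E}_*$ is an integral domain, and therefore so is $\mathbb{E}^*(BT)$; in particular, $\mathbb{E}_T^*(M)$ is torsion-free as an $\mathbb{E}^*(BT)$-module.

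Next, I would apply Lemma \ref{lem:localization} to produce a multiplicatively closed subset $S \subset \mathbb{E}^*(BT)$ of nonzero elements --- hence of nonzerodivisors in the domain $\mathbb{E}^*(BT)$ --- such that the induced map
\begin{align*}
S^{-1} j^*: S^{-1} \mathbb{E}_T^*(M) \;\longrightarrow\; S^{-1} \mathbb{E}_T^*(M^T)
\end{align*}
is an isomorphism. Any $\alpha \in \ker(j^*)$ becomes zero after inverting $S$, so $s\alpha = 0$ for some $s \in S$; by torsion-freeness of $\mathbb{E}_T^*(M)$, this forces $\alpha = 0$, giving the desired injectivity.

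The "in particular" clause is then immediate: when $\mathbb{E}_*$ is a graded field, every $\mathbb{E}_*$-module is free, so the freeness hypothesis on $\mathbb{E}^*(M)$ is automatically satisfied and the argument applies to any Hamiltonian $T$-manifold. Given the two inputs, the step above is quite short, so the main obstacle lies upstream in the proof of Lemma \ref{lem:localization}: one must verify that the "integral" and "finite Krull dimension" hypotheses on the HKR theory guarantee that the localizing multiplicative set --- typically built from equivariant Euler classes of normal bundles of fixed loci --- indeed consists of nonzero elements of $\mathbb{E}^*(BT)$, so that torsion-freeness can actually be converted into injectivity.
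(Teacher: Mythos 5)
Your proposal is correct and follows essentially the same route as the paper: the paper combines Corollary \ref{cor:randomthing} (the consequence of equivariant formality you derive from Theorem \ref{thm:general}(1) and the identification $\mathbb{E}^*(BT)\cong\mathbb{E}_*[\![u_1,\dots,u_m]\!]$) with Lemma \ref{lem:localization} to conclude that $\ker(j^*)$ is torsion over the domain $\mathbb{E}^*(BT)$ while $\mathbb{E}_T^*(M)$ is free, hence $j^*$ is injective. Your closing worry about the multiplicative set is moot, since Lemma \ref{lem:localization} simply tensors with the fraction field $R=\operatorname{Frac}(\mathbb{E}^*(BT))$, exactly as your argument requires.
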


This is in contrast to the aforementioned failure of injectivity of $j^*$ for ordinary cohomology with finite field coefficients. The ingredients in the proof can further be used to prove the Atiyah--Bott ``integration formula" with $\mathbb{E}^*$-coefficients, which is completely analogous to the classical statement for the rational cohomology; see Theorem \ref{AB-localization}.

The second application concerns one of the most interesting classes of HKR theories, the Morava $K$-theories $K_{p}(n)$. For $\mathbb{E} = K_{p}(n)$, we can prove the following Goresky--Kottwitz--MacPherson type theorem which calculates the (equivariant) Morava $K$-theories of GKM spaces. 

Let $(M,\omega)$ be a compact symplectic manifold with Hamiltonian torus action by $T = (S^1)^m$. It is said to satisfy the \emph{GKM conditions} if $T$ acts with only finitely many fixed points $x_1, \dots, x_k$, and the union of $T$-orbits of corank at most $1$ is a union of invariant two-spheres $\ov{E}_1, \dots, \ov{E}_l$. Each $\ov{E}_j$ is isomorphic to $S^2 = \mathbb{C}^{\times} \cup \{x_{j_0}\} \cup \{x_{j_{\infty}}\} = E_{j} \cup \{x_{j_0}\} \cup \{x_{j_{\infty}}\}$ for some $x_{j_0}, x_{j_{\infty}} \in \{x_1, \dots, x_k\}$, and the action of $T$ on $E_j$ factors through a group homomorphism $\Theta_j: T \to S^1$ where $S^1$ rotates the sphere $\ov{E}_j$ with fixed points $x_{j_0}$ and $x_{j_{\infty}}$. 

\begin{thm}\label{thm:GKM}
 Let $(M,\omega)$ be a compact symplectic manifold with Hamiltonian torus action satisfying the GKM conditions and let $\mathbb{E}^*=K_p(n)$. The restriction to fixed points
\begin{equation}
    \mathbb{E}^*_{T}(M) \to \bigoplus_{x_i, 1 \leq i \leq k} \mathbb{E}^{*}(pt)[\![u_1, \dots, u_m]\!]
\end{equation}
is injective. Furthermore, the image is given by
\begin{equation}
    \big\{ (f_1, \dots, f_k) \quad | \quad \forall 1 \leq j \leq l, \quad f_{j_0} |_{\mathbb{E}^*_{\ker(\Theta_j)}} = f_{j_{\infty}} |_{\mathbb{E}^*_{\ker(\Theta_j)}}\big\},
\end{equation}
where the restriction $|_{\mathbb{E}^*_{\ker(\Theta_j)}}$ denotes the homomorphism $\mathbb{E}^*_{T}(pt) \to \mathbb{E}^*_{\ker(\Theta_j)}(pt)$ induced by the inclusion $\ker(\Theta_j) \hookrightarrow T$ of the kernel of $\Theta_j$.
\end{thm}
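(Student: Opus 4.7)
The plan is to adapt the classical proof of the GKM theorem to Morava $K$-theory using the equivariant formality and localization machinery of this paper. The argument has three steps: injectivity of $j^*$, a Chang--Skjelbred reduction to the 1-skeleton, and a local computation on each invariant two-sphere. Injectivity is immediate: since $\mathbb{E}_* = K_p(n)_* = \mathbb{F}_p[v_n^{\pm 1}]$ is a graded field, the second assertion of Theorem~\ref{thm:highlight} gives $j^*: \mathbb{E}^*_T(M) \to \mathbb{E}^*_T(M^T)$ injective without further work.

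For the Chang--Skjelbred step, let $M^1 = M^T \cup \bigcup_j \ov{E}_j$ denote the 1-skeleton of the action (the set of points with stabilizer of corank $\leq 1$), which by the GKM hypothesis is the union of fixed points and invariant spheres. I would aim to prove
\[
\mathrm{image}\bigl(\mathbb{E}^*_T(M) \to \mathbb{E}^*_T(M^T)\bigr) = \mathrm{image}\bigl(\mathbb{E}^*_T(M^1) \to \mathbb{E}^*_T(M^T)\bigr)
\]
via the long exact sequence of the pair $(M, M^1)$. Since every stabilizer on $M \setminus M^1$ has corank $\geq 2$, the localization theorem (Lemma~\ref{lem:localization}) implies $\mathbb{E}^*_T(M, M^1)$ is annihilated by a product of Euler classes $e(\chi)$ of codimension-$\geq 2$ characters $\chi$, whereas equivariant formality and the injectivity of $j^*$ imply $\mathbb{E}^*_T(M^T)$ carries no torsion with respect to such elements. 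Together these force the image of $\mathbb{E}^*_T(M)$ in $\mathbb{E}^*_T(M^T)$ to agree with that of $\mathbb{E}^*_T(M^1)$.

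A Mayer--Vietoris argument on $M^1$ then reduces the image description to determining the image of $\mathbb{E}^*_T(\ov{E}_j) \to \mathbb{E}^*_T(x_{j_0}) \oplus \mathbb{E}^*_T(x_{j_\infty})$ for each invariant sphere. Because the $T$-action on $\ov{E}_j$ factors through $\Theta_j: T \to S^1$, the Borel construction $ET \times_T \ov{E}_j$ is the projectivization $\mathbb{P}(\mathcal{O} \oplus \mathcal{L}_{\Theta_j})$ of a line bundle over $BT$, so by the projective bundle formula for complex-oriented theories $\mathbb{E}^*_T(\ov{E}_j)$ is a free rank-two $\mathbb{E}^*_T(pt)$-module. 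The two fixed points correspond to the two canonical sections, and a direct computation with the formal group law of $\mathbb{E}^*$ shows that $(f_{j_0}, f_{j_\infty})$ lies in the image if and only if $f_{j_0} - f_{j_\infty}$ is divisible by the Euler class $e(\Theta_j)$, which is precisely the generator of the kernel of the restriction map $\mathbb{E}^*_T(pt) \to \mathbb{E}^*_{\ker(\Theta_j)}(pt)$.

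The main obstacle is the Chang--Skjelbred reduction. The classical argument uses torsion-freeness of $H^*_T(M)$ as a module over the polynomial ring $H^*_T(pt)$, whereas $\mathbb{E}^*_T(pt) \cong K_p(n)_*[\![u_1,\ldots,u_m]\!]$ is a complete local ring whose module theory is more delicate. Formulating the correct ``torsion-freeness'' statement—and verifying that the Euler classes of non-GKM characters become units in the localizations provided by Lemma~\ref{lem:localization}—is where most of the effort will lie.
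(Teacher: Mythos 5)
Your overall architecture matches the paper's: injectivity from Theorem \ref{thm:highlight} (valid, since $K_p(n)_*$ is a graded field), a Chang--Skjelbred reduction to the one-skeleton $M_1$, and a local computation on each invariant sphere. Your endgame on the spheres is essentially fine (the paper instead runs a Mayer--Vietoris argument on $\ov{E}_j = U_j \cup V_j$ and identifies $\mathbb{E}^*_T(E_j) \cong \mathbb{E}^*_{\ker(\Theta_j)}(pt)$), but note that your divisibility criterion ``$f_{j_0}-f_{j_\infty}$ divisible by $e(\Theta_j)$'' is equivalent to the stated condition only because, for Morava $K$-theory, the kernel of $\mathbb{E}^*_T(pt)\to\mathbb{E}^*_{\ker(\Theta_j)}(pt)$ is generated by $e(\Theta_j)$ \emph{even when $\ker(\Theta_j)$ is disconnected}; this rests on the computation $[p^r]\cdot_{\mathbb{E}} u = (\text{unit})\cdot u^{p^{rn}}$ of Lemma \ref{lem:MoravaBA}(2) and is false for other HKR theories such as $\hat{K}(n)$, so it deserves more than ``a direct computation with the formal group law.''

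The genuine gap is the Chang--Skjelbred step, which you correctly flag as the main obstacle but for which your sketch is not a proof. The inference ``$\mathbb{E}^*_T(M,M_1)$ is annihilated by Euler classes of corank-$\geq 2$ isotropy, and $\mathbb{E}^*_T(M^T)$ is torsion-free, hence the images of $\mathbb{E}^*_T(M)$ and $\mathbb{E}^*_T(M_1)$ in $\mathbb{E}^*_T(M^T)$ agree'' does not follow as stated: one needs either a height-one localization/reflexivity argument over $\mathbb{E}_*[\![u_1,\dots,u_m]\!]$, or, as the paper does in Proposition \ref{prop:6.3}, the Chang--Skjelbred argument that freeness of $\mathbb{E}^*_T(M)$ forces the annihilator $I(\xi)$ of $\delta'(\xi)$ to be principal, after which the support Lemma \ref{lemma:CS} produces a codimension-one $\mathfrak{m}$ with $\delta^{\mathfrak{m}}(\xi)\neq 0$ and hence a contradiction. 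More importantly, your sketch never confronts the Morava-specific issue that makes any such reduction work: points of $M\setminus M_1$ may have stabilizers $K=A\times K^0$ with large finite part, and $K(n)^*(BA)$ is free of rank $p^{rn}>1$, not negligible. What saves the argument is Lemma \ref{lem:MoravaBA}(2)/Lemma \ref{lem:ABsupport}(2): $K(n)^*(BA)$ is a \emph{nilpotent} extension of $K(n)_*$, so $\operatorname{supp}\mathbb{E}^*_T(T/K)$ depends only on $K^0$; this is exactly what guarantees that codimension-one supports force the relevant strata into $M_1$, it is what your phrase ``Euler classes of non-GKM characters become units after localization'' is secretly using, and it is why the theorem is stated for $K_p(n)$ rather than for general integral HKR theories (e.g.\ the support of $\hat{K}(n)^*(B\mu_p)$ strictly contains that of a point). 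Until this input is identified and used, the reduction to $M_1$ --- the heart of the theorem --- is missing from your proposal.
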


This statement reduces the calculation of the cohomology rings $\mathbb{E}^*(M)$ and $\mathbb{E}^*_{T}(M)$ to the combinatorics of the $T$-invariant $2$-spheres. We leave the applications of this circle of ideas to topology and enumerative geometry for future investigations.

\subsection*{Acknowledgements}
We thank David Anderson, Yael Karshon, Allen Knuston, and Che Shen for sharing their expertise on Bott--Samelson resolutions. D.P. was partially funded by NSF grant DMS-2306204 while working on this note.

\section{Complex-oriented splitting}
\subsection{Complex-oriented theories}

Suppose $\mathbb{E}^*$ is a complex-oriented cohomology theory with coefficient ring $\mathbb{E}_*$. Recall that this means that there exists a class $u \in \mathbb{E}^2(BS^1)$ that
restricts to a generator of the reduced cohomology,  $\tilde{\mathbb{E}}^*(\mathbb{CP}^1)$ (a free rank one $\mathbb{E}_*$–module). Such
a class $u$ is called a complex orientation of $\mathbb{E}^*$. The universal example of such a cohomology theory is complex cobordism $MU^*$. For a multiplicative cohomology theory $\mathbb{E}^*$, complex orientations are in bijective correspondence with multiplicative natural transformations to $\mathbb{E}^*$ from complex cobordism $MU^*$. A choice of orientation $u \in \mathbb{E}^2(BS^1)$ gives rise to isomorphisms \begin{align} \mathbb{E}^*(BS^1)=\mathbb{E}_*[\![u]\!], \quad \mathbb{E}^*(B(S^1)^m)=\mathbb{E}_*[\![u_1,u_2,\cdots,u_m]\!].  \end{align}

It follows from this that the complex orientation  of $\mathbb{E}^*$ has an associated formal group law. Let $\mu: BS^1\times BS^1 \to BS^1$ denote the universal classifying map for tensor product of line bundles.  The formal group law is by definition the pull-back of $u$ under this classifying map: \begin{align} u_1+_{L_{\mathbb{E}}} u_2= L_\mathbb{E}(u_1,u_2)=\mu^*(u) \in \mathbb{E}_*[\![u_1,u_2]\!]. \end{align} From the formal group law, one can extract the $\ell$-series, which is given by: \begin{align}[\ell]\cdot_{\mathbb{E}}u =\underbrace{u+_{L_{\mathbb{E}}} u+_{L_{\mathbb{E}}} \cdots +_{L_{\mathbb{E}}} u}_{l \text{ times}} \in \mathbb{E}_*[\![u]\!].\end{align}

We also record the following technical fact which will be used in the sequel: 

\begin{lemma} \label{lem:AtiyahHirzebruch} Let $\mathbb{E}^*$ be a complex-oriented cohomology theory and let $B$ be a finite-type CW complex (meaning $B$ has finitely many cells in each degree) with only even cells. The Atiyah--Hirzebruch spectral sequence\begin{align} H^p(B,\mathbb{E}^q(pt))=> \mathbb{E}^{p+q}(B) \end{align} degenerates. \end{lemma}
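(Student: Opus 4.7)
The plan is the classical parity (``sparseness'') argument for the Atiyah--Hirzebruch spectral sequence. Strong convergence is already handed to us by the finite-type hypothesis on $B$, so the only thing left to check is that each differential $d_r\colon E_r^{p,q}\to E_r^{p+r,q-r+1}$ vanishes for $r\geq 2$.

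First I would observe that the even-cells hypothesis forces $H^p(B;A)=0$ for every odd $p$ and every abelian group $A$; in particular $E_2^{p,q}=H^p(B;\mathbb{E}^q(\mathrm{pt}))$ vanishes whenever $p$ is odd. Next I would invoke the standard fact that the coefficient ring $\mathbb{E}^*(\mathrm{pt})$ of a complex-oriented theory is concentrated in even degrees (this is formal for $MU$ and passes along the canonical orientation morphism $MU^*\to\mathbb{E}^*$; it is also implicit in the identification $\mathbb{E}^*(BS^1)=\mathbb{E}_*[\![u]\!]$ recorded just above in the excerpt). Consequently $E_2^{p,q}$, and hence each $E_r^{p,q}$, is supported only on bidegrees in which \emph{both} coordinates are even.

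Given this bigraded support, vanishing of every differential is a one-line parity check: for $d_r\colon E_r^{p,q}\to E_r^{p+r,q-r+1}$ to connect two nonzero entries one would need $p+r$ even (forcing $r$ even) while simultaneously $q-r+1$ even (forcing $r$ odd), an impossibility. Thus $d_r=0$ for all $r\geq 2$ and the spectral sequence collapses at $E_2$. The only genuinely nontrivial input, and hence the ``main obstacle,'' is the second observation above about the evenness of $\mathbb{E}^*(\mathrm{pt})$; everything else is bookkeeping.
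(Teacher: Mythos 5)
Your argument breaks at the one step you yourself flag as the ``main obstacle'': it is \emph{not} true that the coefficient ring of an arbitrary complex-oriented theory is concentrated in even degrees. A complex orientation is just a class $u\in\mathbb{E}^2(BS^1)$ restricting to a generator of $\tilde{\mathbb{E}}^*(\mathbb{CP}^1)$, equivalently a multiplicative transformation $MU^*\Rightarrow\mathbb{E}^*$; the existence of such a ring map forces nothing about the odd homotopy of $\mathbb{E}$. Concretely, ordinary cohomology with coefficients in a graded ring containing odd-degree elements (e.g.\ an exterior algebra on an odd generator), or a square-zero extension $\mathbb{E}\oplus\Sigma\mathbb{E}$ of any complex-oriented $\mathbb{E}$, is complex orientable with $\mathbb{E}^{\mathrm{odd}}(pt)\neq 0$. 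The identification $\mathbb{E}^*(BS^1)=\mathbb{E}_*[\![u]\!]$ quoted in the paper is a statement over the (possibly odd-containing) ring $\mathbb{E}_*$ and does not imply evenness either. So your sparseness pattern ``both $p$ and $q$ even'' fails in general: only the $p$-direction is controlled by the even-cell hypothesis, and a differential $d_r\colon E_r^{p,q}\to E_r^{p+r,q-r+1}$ with $r$ even and $q$ odd is not excluded by parity. Your proof is fine for \emph{even} theories ($MU$, $KU$, Morava $K$-theories, \dots), but the lemma is asserted for all complex-oriented theories.

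The missing idea, which is how the paper argues, is to exploit the $\mathbb{E}_*$-linearity of the Atiyah--Hirzebruch differentials together with naturality in the cohomology theory. Since $H^*(B;\mathbb{Z})$ is free (even cells, finite type), $E_2^{p,q}=H^p(B;\mathbb{Z})\otimes\mathbb{E}^q(pt)$, and because the differentials are linear over $\mathbb{E}_*$ it suffices to show $d_r(\alpha\otimes 1)=0$ for $\alpha\in H^*(B;\mathbb{Z})$. These classes lift along the map of spectral sequences induced by the orientation $MU^*\Rightarrow\mathbb{E}^*$, and for $MU$ your parity argument is valid because $MU^*(pt)$ \emph{is} even; hence $\alpha\otimes 1$ is a permanent cycle in the $\mathbb{E}$-spectral sequence, and linearity kills all differentials. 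If you add this reduction-to-$MU$ step (or restrict the hypothesis to even theories), your argument becomes correct.
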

\begin{proof} Because the cohomology groups $H^p(B,\mathbb{Z})$ are free, the $E_2$ page of the spectral sequence can be identified with \begin{align} E_2^{p,q}= H^p(B)\otimes \mathbb{E}^q(pt). \end{align} The spectral sequence is linear over $\mathbb{E}^q(pt)$ and it suffices to prove that \begin{align} d^r(\alpha\otimes 1)=0 \text{ for all } \alpha \in H^*(B). \end{align} If $\mathbb{E}^*=MU^*$ is complex cobordism the spectral sequence collapses because the $E_2$ page is concentrated in even degree. In general, the complex orientation of $\mathbb{E}^*$ is induced by a natural transformation (of multiplicative cohomology theories) $MU^* => \mathbb{E}^*$ and the Atiyah-Hirzebruch sequence is natural in the cohomology theory. The result therefore follows from the case of complex cobordism.  \end{proof}

\subsection{Splittings}
Next, we discuss cohomological splittings with $\mathbb{E}^*$ coefficients, where $\mathbb{E}^*$ is a complex-oriented cohomology theory.

\begin{defn} Suppose that we have a fibration $M \xrightarrow{i} P \xrightarrow{\pi} B$. A fibration is \emph{c-split} if there exists a map of $\mathbb{E}_*$ modules $s:\mathbb{E}^*(M) \to \mathbb{E}^*(P)$ so that $i^* \circ s=\operatorname{id}.$ We will refer to $s$ as a \emph{c-splitting}. \end{defn}

\begin{defn} A smooth manifold $B$ is called \emph{Hamiltonian c-split} for $\mathbb{E}^*$ if every Hamiltonian fibration $M \xrightarrow{i} P \xrightarrow{\pi} B$ is c-split. \end{defn}

We suppress the cohomology theory $\mathbb{E}$ when it is clear from the context. The key result for us is the following result of Abouzaid--McLean--Smith: 

\begin{thm}[\cite{AMS}] \label{thm:AMS} The one-dimensional projective space, $\mathbb{CP}^1$, is Hamiltonian c-split. \end{thm}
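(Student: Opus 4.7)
The plan is to adapt the pseudo-holomorphic section strategy of Lalonde--McDuff--Polterovich (originally developed for rational cohomology) to an arbitrary complex-oriented theory $\mathbb{E}^*$. Given a Hamiltonian fibration $M \xrightarrow{i} P \xrightarrow{\pi} \mathbb{CP}^1$, the reduction of structure group to $\operatorname{Ham}(M,\omega)$ equips $P$ with a coupling $2$-form; for sufficiently large base area this makes $P$ symplectic, and we fix a compatible almost complex structure $J$ on $P$ for which $\pi$ is $J$-holomorphic. For each section homology class $A$ with $\pi_* A = [\mathbb{CP}^1]$, consider the Gromov-compactified moduli space $\overline{\mathcal{M}}(A)$ of $J$-holomorphic sections of $\pi$ equipped with two marked points $z_0, z_\infty$ mapped to distinguished fibers $M_0, M_\infty$ via evaluations $\mathrm{ev}_0, \mathrm{ev}_\infty$.

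Assuming a virtual fundamental class $[\overline{\mathcal{M}}(A)]^{\mathrm{vir}}$ valued in $\mathbb{E}$-homology, the candidate c-splitting $s: \mathbb{E}^*(M) \to \mathbb{E}^*(P)$ would be a Seidel-type operator of the schematic form
\[
s(\alpha) \;=\; \sum_{A} \mathrm{PD}_P \Bigl( (i_0 \circ \mathrm{ev}_0)_* \bigl( \mathrm{ev}_\infty^* \alpha \cap [\overline{\mathcal{M}}(A)]^{\mathrm{vir}} \bigr) \Bigr),
\]
where $i_0: M_0 \hookrightarrow P$ and the sum over section classes of bounded symplectic area is interpreted in an appropriate completion with respect to the energy filtration. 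To verify $i^* \circ s = \mathrm{id}$, I would restrict to a fiber and run a degeneration of the domain $\mathbb{CP}^1$: the trivial section class contributes the identity operator via constant sections together with sphere bubbles in a single fiber (the standard Seidel computation for the trivial loop), while higher-area corrections form a positive-degree perturbation of the identity in the filtration and can be inverted by a geometric-series argument, yielding a genuine $\mathbb{E}_*$-linear section of $i^*$.

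The main obstacle, and the crux of the \cite{AMS} result, is constructing $[\overline{\mathcal{M}}(A)]^{\mathrm{vir}}$ with values in $\mathbb{E}$-theory for \emph{arbitrary} complex-oriented $\mathbb{E}$, including torsion-sensitive theories like integral cohomology or Morava $K$-theory, where the classical multi-valued perturbation approach produces only rational classes and is therefore inadequate. The natural strategy is via a \emph{global Kuranishi chart}: present $\overline{\mathcal{M}}(A)$ as the zero locus of a $U(k)$-equivariant section of a complex vector bundle over a smooth finite-dimensional $U(k)$-manifold. The complex structure on the obstruction bundle then produces a canonical Thom class in $MU$, and hence via the universal complex orientation $MU \to \mathbb{E}$ a class in $\mathbb{E}^*$. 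One further needs enough naturality of these charts under domain degeneration and fiber inclusion to actually verify $i^* \circ s = \mathrm{id}$ at the level of virtual cycles. Providing this analytic package for Hamiltonian fibrations over $\mathbb{CP}^1$ is precisely what \cite{AMS} accomplishes; once it is taken as input, the remainder of the topological argument sketched above is essentially formal.
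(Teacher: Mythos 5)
Your sketch is essentially the argument of Abouzaid--McLean--Smith themselves (Seidel-type section-counting operator, with the virtual class lifted to $MU$ via global Kuranishi charts and pushed to any complex-oriented $\mathbb{E}$), and indeed the paper does not reprove this statement but simply imports it from \cite{AMS}. So your proposal matches the intended source: it is an accurate outline of the cited proof rather than an independent one, with the hard analytic input correctly identified as exactly what \cite{AMS} supplies.
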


\begin{rem} When $\mathbb{E}^*$ is ordinary cohomology with coefficients in a commutative ring $R$, the first author and Xu (\cite{Bai-Xu}) have given a more direct proof of Theorem \ref{thm:AMS} which avoids the chromatic homotopy theory arguments from \cite{AMS}. \end{rem}

The following two Lemmas are refinements of arguments in \cite{McDuff-Lalonde} (cf. Corollary 4.2 and Lemma 4.3 of \emph{loc. cit.}).

 \begin{lemma} \label{lem:blowups1}
 Let $\tau^{B}:\tilde{B} \to B$ be a  birational map of smooth projective varieties. If  $\tilde{B}$ is Hamiltonian c-split, then so is $B$. 
 \end{lemma}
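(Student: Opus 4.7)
The plan is a Gysin transfer argument. Given a Hamiltonian fibration $M \to P \xrightarrow{\pi} B$, I would pull it back along $\tau^B: \tilde{B} \to B$ to obtain the Hamiltonian fibration $\tilde{\pi}: \tilde{P} \to \tilde{B}$, together with a base-change map $\tilde{\tau}: \tilde{P} \to P$; this pullback is again Hamiltonian since the structure group $\operatorname{Ham}(M,\omega)$ is preserved under pullback. By hypothesis there is a c-splitting $\tilde{s}: \mathbb{E}^*(M) \to \mathbb{E}^*(\tilde{P})$, and I would define $s := \tilde{\tau}_* \circ \tilde{s}: \mathbb{E}^*(M) \to \mathbb{E}^*(P)$ and verify $i^* \circ s = \operatorname{id}$ for a suitable fiber inclusion $i$.

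To define $\tilde{\tau}_*$ in $\mathbb{E}^*$ one needs a complex orientation on $\tilde{\tau}$. Since $\tau^B$ is a proper holomorphic map between complex manifolds of equal dimension, its virtual normal bundle $(\tau^B)^*TB - T\tilde{B}$ is a complex virtual bundle. Because $\pi$ is a submersion, $\tilde{P} = \tilde{B} \times_B P$ is a smooth manifold, and the virtual normal bundle of $\tilde{\tau}$ is canonically identified with $\tilde{\pi}^*((\tau^B)^*TB - T\tilde{B})$; hence $\tilde{\tau}$ inherits a complex orientation, and the Pontryagin--Thom construction gives a Gysin pushforward $\tilde{\tau}_*: \mathbb{E}^*(\tilde{P}) \to \mathbb{E}^*(P)$ in any complex-oriented theory.

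Since $\tau^B$ is birational, it restricts to a biholomorphism over a dense open subset $U \subset B$. I would choose $\tilde{b} \in (\tau^B)^{-1}(U)$, set $b := \tau^B(\tilde{b})$, and take the fiber inclusions $\tilde{i}: M \hookrightarrow \tilde{P}$ and $i: M \hookrightarrow P$ over $\tilde{b}$ and $b$, so that $\tilde{\tau} \circ \tilde{i} = i$. Because $(\tau^B)^{-1}(b) = \{\tilde{b}\}$ and $\tilde{\tau}$ is a local diffeomorphism near $\tilde{i}(M)$, the square
\[
\begin{array}{ccc}
M & \xrightarrow{\tilde{i}} & \tilde{P} \\
\operatorname{id} \downarrow & & \downarrow \tilde{\tau} \\
M & \xrightarrow{i} & P
\end{array}
\]
is Cartesian and transverse. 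Complex-oriented base change then yields $i^* \circ \tilde{\tau}_* = \operatorname{id}_* \circ \tilde{i}^* = \tilde{i}^*$, whence $i^* \circ s = \tilde{i}^* \circ \tilde{s} = \operatorname{id}$. The main technical checkpoint, rather than a genuine obstacle, is the availability of the Gysin pushforward and its base-change formula for the proper, stably almost complex map $\tilde{\tau}$ outside the algebraic category; both are formal consequences of the complex orientation inherited from $\tau^B$.
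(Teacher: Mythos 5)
Your proposal is correct and takes essentially the same route as the paper: pull back the fibration along $\tau^B$, push the c-splitting forward along the complex-oriented map $\tilde{P}\to P$ via the Gysin map, and use the base-change formula over the locus where $\tau^B$ is an isomorphism to get $i^*\circ\tilde{\tau}_*=\tilde{i}^*$, hence $i^*\circ s=\operatorname{id}$. The only cosmetic differences are that the paper equips $\tau^P$ with its complex orientation by factoring it through $\mathbb{CP}^r\times B$ (rather than identifying the virtual normal bundle with $\tilde{\pi}^*((\tau^B)^*TB-T\tilde{B})$), and applies base change to the restriction over the whole open locus $U$ rather than to a single fiber.
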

 \begin{proof} Suppose that we have a Hamiltonian fibration $M \xrightarrow{i} P \xrightarrow{\pi} B$ over $B$ and let $M \xrightarrow{\tilde{i}} \tilde{P} \xrightarrow{\tilde{\pi}} \tilde{B}$ be the pull-back Hamiltonian fibration, which admits a map  $\tau^P:\tilde{P} \to P.$ We can factor $\tau^B$ as an inclusion followed by a projection $\tilde{B} \to \mathbb{CP}^r \times B \to B$ and similarly for $\tau^P$. This shows that the map $\tau^P$ is compatible with complex-orientations. Set $U \subset B$ to be the locus over which $\tau^{B}$ is an isomorphism and let $P_U$ denote the restriction of the bundle to this locus. Note that we can assume that our reference fiber $M_b$ for both $\tilde{P}$ and $P$ lies over a point $b \in U$. We have a Cartesian diagram of complex-oriented manifolds/maps: \[
\xymatrix{
 P_U \ar[d] \ar[r]^{\tilde{j}} & \tilde{P} \ar[d]^{\tau^P}  \\
    P_U \ar[r]^{j} & P.} \] 
  For such Cartesian squares, the Umk\"{e}hr maps on cohomology are compatible with pullback/restriction maps; see \cite[1.7]{Quillen} for complex cobordism, \cite[E(iii)]{Bressler} in general, and \cite[Proposition 12.9]{MR1646248} for a statement at the level of spectra, which implies the statement for cohomology by taking  homotopy groups. In the present case this means that: \begin{align} \label{eq:gysincommutes} j^* \circ \tau^P_*=\tilde{j^*} \text{ and hence } i^* \circ \tau^P_* =\tilde{i}^*. \end{align}
  Suppose we are given a splitting $\tilde{s}:\mathbb{E}^*(M) \to \mathbb{E}^*(\tilde{P})$ such that $(\tilde{i})^* \circ \tilde{s}= \operatorname{id}$. Then it follows from  \eqref{eq:gysincommutes} that \begin{align} s=\tau^P_* \circ \tilde{s}:\mathbb{E}^*(M) \to \mathbb{E}^*(P) \end{align} satisfies $i^* \circ s=\operatorname{id}.$
    \end{proof}

\begin{lemma} \label{lem:tower1} Suppose that we have a Hamiltonian fibration $M \xrightarrow{i} P \xrightarrow{\pi} B$ with $B$ simply connected. Suppose that $M$ and $B$ are Hamiltonian c-split. Then $P$ is also Hamiltonian c-split.  \end{lemma}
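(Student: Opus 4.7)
The plan is to construct a c-splitting for an arbitrary Hamiltonian fibration $F \xrightarrow{i} Q \xrightarrow{p} P$ over $P$ by a two-stage procedure: first split along the restriction of $p$ to a fiber $M \subset P$ of the given fibration $\pi: P \to B$, then split along the composite $\pi \circ p: Q \to B$.

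Fix a basepoint $b \in B$, let $M := \pi^{-1}(b) \subset P$ be a copy of the fiber, and set $Q|_M := p^{-1}(M)$. The restricted fibration $F \to Q|_M \to M$ is Hamiltonian with closed total space. Since $M$ is Hamiltonian c-split by hypothesis, we obtain a c-splitting $s_1: \mathbb{E}^*(F) \to \mathbb{E}^*(Q|_M)$ with $i_M^* \circ s_1 = \operatorname{id}$, where $i_M: F \hookrightarrow Q|_M$ is the fiber inclusion of the restricted fibration.

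The main step is to verify that $\pi \circ p: Q \to B$ is itself a Hamiltonian fibration with fiber $Q|_M$. For this I would invoke the standard criterion (Lalonde--McDuff) that a symplectic fiber bundle over a simply connected base is Hamiltonian if and only if the fiber symplectic form extends to a closed $2$-form on the total space. Using that $p$ and $\pi$ are each Hamiltonian, choose closed extensions $\sigma \in \Omega^2(Q)$ of $\omega_F$ on fibers of $p$, and $\rho \in \Omega^2(P)$ of $\omega_M$ on fibers of $\pi$. For a sufficiently large constant $K > 0$, the Thurston coupling construction shows that the closed form $\tau := \sigma + K p^*\rho \in \Omega^2(Q)$ restricts to a symplectic form $\omega_{Q|_M}$ on $Q|_M$ extending $\omega_F$, and $\tau$ itself exhibits the required closed extension of $\omega_{Q|_M}$ to $Q$. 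Since the transition functions of $Q \to B$ are built from the Hamiltonian transitions of $p$ and $\pi$ and preserve $\omega_{Q|_M}$, the composite is at least a symplectic fiber bundle, and together with the simply-connected hypothesis on $B$ this promotes it to a Hamiltonian fibration. Applying the c-splitting hypothesis on $B$ then yields $s_2: \mathbb{E}^*(Q|_M) \to \mathbb{E}^*(Q)$ with $j^* \circ s_2 = \operatorname{id}$, where $j: Q|_M \hookrightarrow Q$ is the inclusion.

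Finally, the composite $s := s_2 \circ s_1: \mathbb{E}^*(F) \to \mathbb{E}^*(Q)$ is the desired c-splitting: the original fiber inclusion $i: F \hookrightarrow Q$ factors as $j \circ i_M$, whence $i^* \circ s = i_M^* \circ j^* \circ s_2 \circ s_1 = i_M^* \circ s_1 = \operatorname{id}_{\mathbb{E}^*(F)}$. The main obstacle is the verification in the third paragraph that $Q \to B$ is Hamiltonian; the simply-connectedness of $B$ enters essentially here, since without it the extension criterion for the fiber symplectic class can fail, and consequently the c-splitting property of $B$ could not be applied to the iterated bundle.
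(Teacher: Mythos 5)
Your argument is correct and is essentially the paper's proof: restrict the given fibration over $P$ to a fiber $M$ of $\pi$ and split there, then split the composite bundle over $B$, and compose the two c-splittings. The only difference is that where you sketch the Thurston-coupling/extension argument showing $Q \to B$ is Hamiltonian (using simple connectivity of $B$), the paper simply cites this as \cite[Lemma 3.9]{McDuff-Lalonde}.
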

\begin{proof} Let $F \to E \to P$ be a Hamiltonian bundle over $P$. After restricting to $M$ we get a Hamiltonian bundle $F \to W \to M$ which by assumption gives rise to a c-splitting:  \begin{align}\label{eqn:proj-fibration} s_M: \mathbb{E}^*(F) \to \mathbb{E}^*(W). \end{align} The composite bundle $ W \to E \to B$ is also Hamiltonian \cite[Lemma 3.9]{McDuff-Lalonde} and thus gives rise to a c-splitting:  \begin{align} s_B: \mathbb{E}^*(W) \to\mathbb{E}^*(E). \end{align} The desired c-splitting for $F \to E \to P$ is the map $s=s_B \circ s_M$. \end{proof}

\begin{lemma} \label{lem:cohsplitproj} Let $B$ be a product of complex projective spaces. Then $B$ is Hamiltonian c-split.  \end{lemma}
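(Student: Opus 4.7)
The plan is to reduce everything to Theorem \ref{thm:AMS} by a double induction: first on the dimension of a single projective factor using Lemmas \ref{lem:blowups1} and \ref{lem:tower1}, then on the number of factors using Lemma \ref{lem:tower1} alone.

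For a single factor $\mathbb{CP}^n$, I would induct on $n$, with the base case $n=1$ supplied by Theorem \ref{thm:AMS}. For the inductive step, the key construction is to resolve the linear projection from a point $p\in\mathbb{CP}^n$ by blowing up, yielding $\tilde{B} := \operatorname{Bl}_p\mathbb{CP}^n \cong \mathbb{P}(\mathcal{O}\oplus\mathcal{O}(1))\to\mathbb{CP}^{n-1}$, a locally trivial $\mathbb{CP}^1$-bundle whose structure group $PGL_2(\mathbb{C})$ deformation retracts to $PSU(2)=\operatorname{Ham}(\mathbb{CP}^1,\omega_{\mathrm{FS}})$, so the bundle is Hamiltonian. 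With the base $\mathbb{CP}^{n-1}$ simply connected and c-split by the inductive hypothesis, and the fiber $\mathbb{CP}^1$ c-split by Theorem \ref{thm:AMS}, Lemma \ref{lem:tower1} produces a c-splitting for $\tilde{B}$. Since the blow-down $\tilde{B}\to\mathbb{CP}^n$ is a birational morphism of smooth projective varieties, Lemma \ref{lem:blowups1} then transfers c-splitness down to $\mathbb{CP}^n$, closing this induction.

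For a product $B=\mathbb{CP}^{n_1}\times\cdots\times\mathbb{CP}^{n_k}$, I would then induct on $k$: viewing $B$ as the trivial (hence tautologically Hamiltonian) $\mathbb{CP}^{n_k}$-bundle over $\mathbb{CP}^{n_1}\times\cdots\times\mathbb{CP}^{n_{k-1}}$, the simply connected base is c-split by the induction on $k$ and the fiber is c-split by the previous paragraph, so Lemma \ref{lem:tower1} applies and closes the induction.

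The main obstacle I foresee is essentially bookkeeping: confirming that the $\mathbb{CP}^1$-bundle $\mathbb{P}(\mathcal{O}\oplus\mathcal{O}(1))\to\mathbb{CP}^{n-1}$ is Hamiltonian in the sense of the paper, i.e.\ that its structure group reduces to $\operatorname{Ham}(\mathbb{CP}^1,\omega_{\mathrm{FS}})$. This follows because a complex projective bundle of fiber dimension one has structure group $PGL_2(\mathbb{C})$, which is homotopy equivalent to $PSU(2)\subset\operatorname{Ham}(\mathbb{CP}^1,\omega_{\mathrm{FS}})$; equivalently, any Kähler form on the total space restricts fiberwise to a positive multiple of $\omega_{\mathrm{FS}}$ and supplies the desired reduction. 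Once this verification is in hand, the proof is a direct recursion on the two inductive parameters.
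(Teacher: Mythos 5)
Your proof is correct and follows essentially the same route as the paper: reduce to a single $\mathbb{CP}^k$ via Lemma \ref{lem:tower1}, then induct on dimension by realizing the blow-up of $\mathbb{CP}^k$ at a point as a Hamiltonian $\mathbb{CP}^1$-bundle over $\mathbb{CP}^{k-1}$ and combining Lemma \ref{lem:tower1} with Lemma \ref{lem:blowups1}. The only differences are cosmetic: your $\mathbb{P}(\mathcal{O}\oplus\mathcal{O}(1))$ is the same projective bundle as the paper's $\mathbb{P}(\mathcal{O}\oplus\mathcal{O}(-1))$ up to a twist, and your explicit check that the structure group reduces to $PSU(2)\subset\operatorname{Ham}(\mathbb{CP}^1,\omega_{\mathrm{FS}})$ is a detail the paper leaves implicit.
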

\begin{proof} In view of Lemma \ref{lem:tower1}, it suffices to consider the case where $B=\mathbb{CP}^{k}$ is a single projective space. We proceed by induction on $k$. The base case where $k=1$ is Theorem \ref{thm:AMS}.  Consider the blow-up $\tilde{B}$ of $\mathbb{CP}^{k}$ at a point. Then $\tilde{B}$ can be identified with the $\mathbb{CP}^1$-bundle: \begin{align} \mathbb{P}(\mathcal{O}_{\mathbb{CP}^{k-1}}\oplus \mathcal{O}_{\mathbb{CP}^{k-1}}(-1)) \to \mathbb{CP}^{k-1}. \end{align}  The inductive hypothesis together with Lemma \ref{lem:tower1} shows that $\tilde{B}$ is Hamiltonian c-split. It then follows from Lemma \ref{lem:blowups1} that $\mathbb{CP}^{k}$ is c-splitting.  \end{proof}

More generally, we have the following result:

\begin{lemma} \label{lem:cohsplit} Let $B$ be a product of Grassmannians. Then $B$ is Hamiltonian c-split.  \end{lemma}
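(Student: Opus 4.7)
The plan is to use Bott--Samelson resolutions to reduce Grassmannians (and their products) to iterated $\mathbb{CP}^1$-bundles, to which the c-splitting results already established in this section apply.

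Each Grassmannian $\operatorname{Gr}(k,n)$ admits a Bott--Samelson resolution $\tau\colon Z \to \operatorname{Gr}(k,n)$, where $Z$ is a smooth projective variety equipped with a tower of holomorphic $\mathbb{CP}^1$-bundles
\beqn
\mathrm{pt} = Z_0 \leftarrow Z_1 \leftarrow \cdots \leftarrow Z_N = Z,
\eeqn
arising from a reduced expression for the longest minimal-length representative in $S_n/(S_k \times S_{n-k})$; the morphism $\tau$ is birational by construction. Each $Z_i \to Z_{i-1}$ has structure group reducing to $PU(2)$, and since $H^1(\mathbb{CP}^1)=0$ every symplectomorphism of $(\mathbb{CP}^1,\omega_{FS})$ is automatically Hamiltonian, so each stage of the tower is a Hamiltonian fibration.

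I would then prove by induction on $i$ that $Z_i$ is simply connected and Hamiltonian c-split. The base case $Z_0 = \mathrm{pt}$ is trivial. In the inductive step, $Z_{i-1}$ is simply connected and Hamiltonian c-split by hypothesis; Lemma \ref{lem:tower1} applied to the Hamiltonian fibration $\mathbb{CP}^1 \to Z_i \to Z_{i-1}$, together with Theorem \ref{thm:AMS} for $\mathbb{CP}^1$, gives that $Z_i$ is Hamiltonian c-split, and the long exact homotopy sequence for this fibration gives that $Z_i$ is simply connected. Consequently $Z$ is Hamiltonian c-split, and Lemma \ref{lem:blowups1} applied to the birational morphism $\tau$ transfers this property to $\operatorname{Gr}(k,n)$. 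For a product $B = B_1 \times \cdots \times B_r$ of Grassmannians, the product of the individual Bott--Samelson resolutions $Z_1 \times \cdots \times Z_r \to B$ is again birational, with source an iterated $\mathbb{CP}^1$-bundle tower obtained by concatenating the individual towers (each stage remains a Hamiltonian $\mathbb{CP}^1$-bundle as a pullback of one). The same induction together with Lemma \ref{lem:blowups1} then yields that $B$ is Hamiltonian c-split.

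The main point requiring care is verifying that the Bott--Samelson tower consists of genuinely Hamiltonian $\mathbb{CP}^1$-bundles --- that the structure group reduces to $PU(2)$ compatibly with the Fubini--Study form. This follows from the Lie-theoretic construction of Bott--Samelson varieties as iterated fiber products $P_{i_1} \times_B \cdots \times_B P_{i_N}/B^N$, where each $P_{i_j}/B \cong \mathbb{CP}^1$ carries a transitive $SU(2)$-action by isometries of $\omega_{FS}$; still, bridging this algebro-geometric description to the symplectic setting required by Lemma \ref{lem:tower1} is where the technical care is concentrated.
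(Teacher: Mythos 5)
Your proposal is correct and follows essentially the same route as the paper: a Bott--Samelson resolution realized as an iterated $\mathbb{CP}^1$-bundle tower, with Theorem \ref{thm:AMS} and Lemma \ref{lem:tower1} applied inductively up the tower and Lemma \ref{lem:blowups1} transferring c-splitness across the birational morphism onto the Grassmannian. The only cosmetic differences are that the paper reduces the product case to a single Grassmannian via Lemma \ref{lem:tower1} (rather than taking a product of resolutions) and phrases the birationality through a Schubert variety in the full flag variety, and it leaves implicit the point you flag explicitly (that the holomorphic $\mathbb{CP}^1$-bundle stages are Hamiltonian, via $\operatorname{Symp}(S^2,\omega_{FS})\simeq PU(2)$ and $H^1(\mathbb{CP}^1)=0$).
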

\begin{proof}
Again, it suffices to consider a single Grassmannian. 
Recall that the Schubert varieties of the complete flag variety $GL(n,{\mb C})/B$ are indexed by elements $w$ of the symmetric group $S_n$. If we consider the Schubert variety $Z_w$ corresponding to the permutation $w(i) = i+k (\operatorname{mod} n)$, this maps birationally onto the complex Grassmannian $\mathrm{Gr}(k,n)$ via the projection $GL(n,{\mb C})/B \to \mathrm{Gr}(k,n)$ (see discussion above Example 1.2.3 in \cite{brion-flags}). Now take the Bott--Samelson variety $Z_{\underline{w}}$ associated to a reduced decomposition $\underline{w}$ of $w$. By e.g. \cite[Proposition 2.2.1 (iii), (iv)]{brion-flags}, this is an iterated fibration of K\"ahler manifolds \begin{align} M_k \to M_{k-1} \to \cdots \to M_1=\mathbb{CP}^1 \end{align} with fiber $\mathbb{CP}^1$, which maps birationally onto $Z_w$ and hence onto $\mathrm{Gr}(k,n)$. The statement again follows from Lemma \ref{lem:blowups1} and Lemma \ref{lem:tower1}.
\end{proof}

\subsection{Proof of Theorems}

We now turn to giving our main arguments. As in previous sections, $\mathbb{E}^*$ will denote a complex-oriented cohomology theory. 
We will make use of the following technical variant of the Leray--Hirsch theorem:

\begin{lemma} \label{lem:lerayhirsch} 
Suppose that we have a c-split fibration $M \xrightarrow{i} P \xrightarrow{\pi} B$, where $B$ is a finite type CW-complex with only even dimensional cells. Then the following hold: \begin{enumerate} \item If $B$ is finite, the natural map \begin{align} \phi: \mathbb{E}^*(M)\otimes_{\mathbb{E}_*} \mathbb{E}^*(B) \to \mathbb{E}^*(P) \end{align} is an isomorphism. \item For general $B$ as above, the c-splitting induces an isomorphism \begin{align} \mathbb{E}^*(M \times B)\cong \mathbb{E}^*(P). \end{align} of $\mathbb{E}^*(B)$-modules.  \end{enumerate} \end{lemma}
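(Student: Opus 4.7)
Using the c-splitting $s$, I define the natural map $\phi: \mathbb{E}^*(M) \otimes_{\mathbb{E}_*} \mathbb{E}^*(B) \to \mathbb{E}^*(P)$ by $\phi(a \otimes b) = s(a) \cdot \pi^*(b)$. The plan is to analyze this through the Atiyah--Hirzebruch--Serre spectral sequence
\begin{equation*}
E_2^{p,q} = H^p(B; \mathbb{E}^q(M)) \Rightarrow \mathbb{E}^{p+q}(P).
\end{equation*}
Since $B$ has only even cells it is simply connected, so the local coefficient system is trivial; and $H^*(B;\mathbb{Z})$ is free, hence $E_2^{p,q} \cong H^p(B) \otimes_{\mathbb{Z}} \mathbb{E}^q(M)$.

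The first step is to show the spectral sequence degenerates at $E_2$. Since it is multiplicative, it suffices to verify that the subalgebras $E_2^{0,*}$ and $E_2^{*,0}$ consist of permanent cycles. By the c-splitting, elements of $E_2^{0,*} = \mathbb{E}^*(M)$ lift to $\mathbb{E}^*(P)$ via $s$, so they survive. For base classes I compare with the Atiyah--Hirzebruch spectral sequence for $B$: the unit $\mathbb{E}^*(pt) \to \mathbb{E}^*(M)$ induces a map of spectral sequences converging to $\pi^*: \mathbb{E}^*(B) \to \mathbb{E}^*(P)$. By Lemma \ref{lem:AtiyahHirzebruch}, the source spectral sequence degenerates, so its image in $E_2^{*,0} = H^*(B) \otimes \mathbb{E}^0(M)$, namely the classes $\alpha \otimes 1$, consists of permanent cycles. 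Every class in $E_2^{p,q}$ is a sum of products $(\alpha \otimes 1)(1 \otimes c)$ with $\alpha \in H^p(B)$ and $c \in \mathbb{E}^q(M)$, so the Leibniz rule kills all higher differentials.

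The Serre filtration on $\mathbb{E}^*(P)$ then has $\operatorname{gr} \mathbb{E}^*(P) \cong H^*(B) \otimes \mathbb{E}^*(M)$, while Lemma \ref{lem:AtiyahHirzebruch} gives $\operatorname{gr} \mathbb{E}^*(B) \cong H^*(B) \otimes \mathbb{E}^*(pt)$. The map $\phi$ respects these filtrations, and the induced map on associated graded
\begin{equation*}
\mathbb{E}^*(M) \otimes_{\mathbb{E}_*} \big(H^*(B) \otimes \mathbb{E}^*(pt)\big) \longrightarrow H^*(B) \otimes \mathbb{E}^*(M), \qquad a \otimes (\alpha \otimes c) \longmapsto \alpha \otimes ac,
\end{equation*}
is manifestly an isomorphism. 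For $B$ finite the filtrations are bounded, so $\phi$ itself is an isomorphism; this gives part (1).

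For part (2), I write $B = \colim_n B_n$ with $B_n = B^{(2n)}$ the $2n$-skeleton, which is finite with only even cells. Restricting $s$ along $\mathbb{E}^*(P) \to \mathbb{E}^*(P_n)$ gives c-splittings for the restricted fibrations $P_n \to B_n$; part (1) and the finite K\"unneth formula yield $\mathbb{E}^*(P_n) \cong \mathbb{E}^*(M) \otimes_{\mathbb{E}_*} \mathbb{E}^*(B_n) \cong \mathbb{E}^*(M \times B_n)$. Lemma \ref{lem:AtiyahHirzebruch} also shows that the restriction maps $\mathbb{E}^*(B_{n+1}) \to \mathbb{E}^*(B_n)$ are surjective, and applying $\mathbb{E}^*(M) \otimes_{\mathbb{E}_*} -$ transports this surjectivity to $\mathbb{E}^*(P_{n+1}) \to \mathbb{E}^*(P_n)$. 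The Mittag--Leffler condition is therefore satisfied, the relevant $\lim^1$ terms vanish, and Milnor's sequence delivers $\mathbb{E}^*(P) \cong \mathbb{E}^*(M \times B)$. The main technical obstacle lies in the degeneration step: verifying that the base classes in $E_2^{*,0}$ are permanent cycles, which is exactly where the even-cell hypothesis and Lemma \ref{lem:AtiyahHirzebruch} are essential. Once that is in hand, everything reduces to standard filtered-module and Mittag--Leffler arguments.
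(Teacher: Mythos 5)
Your proposal is correct, but for Part (1) it takes a genuinely different route from the paper. The paper follows Hatcher's proof of Leray--Hirsch: induction over the cells of $B$, removing a point from each top cell to get $B'\subset B$, using Lemma \ref{lem:AtiyahHirzebruch} to see that $\mathbb{E}^*(B,B')$, $\mathbb{E}^*(B)$, $\mathbb{E}^*(B')$ are free with vanishing connecting maps, tensoring the resulting short exact sequences with $\mathbb{E}^*(M)$, and concluding by excision and the five-lemma; this only uses additivity and naturality of long exact sequences, not the multiplicative structure of any spectral sequence. You instead prove degeneration of the Serre--Atiyah--Hirzebruch spectral sequence of $P\to B$ directly: fiber classes are permanent cycles because the c-splitting makes $i^*$ surjective (edge homomorphism), base classes are permanent cycles by comparison with the degenerate AHSS of $B$ via $\pi$, and the Leibniz rule then kills all differentials; finally you compare associated gradeds of $\phi$ using the boundedness of the filtration for finite $B$. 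This is exactly the alternative the authors allude to in the remark following the lemma ("one can also use standard spectral sequence arguments"), and it is sound, at the cost of invoking multiplicativity of the spectral sequence for a ring spectrum and a careful filtration comparison (including the freeness of $\operatorname{gr}^p\mathbb{E}^*(B)\cong H^p(B)\otimes\mathbb{E}_*$ so that tensoring with $\mathbb{E}^*(M)$ preserves the filtration quotients, and the fact that the symbol of $s(a)\pi^*(b)$ is the product of symbols). Your Part (2) coincides with the paper's argument: restrict the given $s$ to the finite even skeleta so the Part (1) isomorphisms are compatible with restriction, deduce surjectivity of $\mathbb{E}^*(P_{n+1})\to\mathbb{E}^*(P_n)$, and apply Mittag--Leffler and the Milnor sequence to both $\{P_n\}$ and $\{M\times B_n\}$. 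One small point worth making explicit in your write-up: only even cells forces no $1$-cells, hence simple connectivity, which is what guarantees the untwisted $E_2$-page you use.
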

\begin{proof}\emph{Part (1):}  We follow the proof of the Leray--Hirsch theorem in \cite[Theorem 4D.1]{Hatcher}.   We assume that $B$ has dimension $m$ and proceed by induction on $m$. We let $B'$ be the subspace obtained by deleting a point $x_\alpha$ from each $m$ cell $U_\alpha$ and let $P'$ denote the restriction of the fibration to $B'$. Then $M \to P' \to B'$ is also a c-split fibration. We let $V_\alpha \subset U_\alpha$ be small disc about $x_\alpha$ over which the fibration is trivial.

We note that the cohomologies $\mathbb{E}^*(B,B')$, $\mathbb{E}^*(B)$, $\mathbb{E}^*(B')$ are all free over $\mathbb{E}_*$. For $\mathbb{E}^*(B)$, $\mathbb{E}^*(B')$ this follows from Lemma \ref{lem:AtiyahHirzebruch}, together with the fact that $B'$ retracts onto the $m-2$ skeleton. For $\mathbb{E}^*(B,B')$, this follows form the excision isomorphism $\mathbb{E}^*(B,B') \cong \mathbb{E}^*(V_\alpha,V_\alpha-x_\alpha).$ Lemma \ref{lem:AtiyahHirzebruch} implies that the maps $\mathbb{E}^*(B) \to \mathbb{E}^*(B')$ are surjections. It follows from the long exact sequence for the pair $(B,B')$ that the connecting maps \begin{align} \label{eq:connectingzero} \delta :\mathbb{E}^*(B')  \to \mathbb{E}^{*+1}(B,B') \text{ are zero.} \end{align} 
This means that the long exact sequence gives rise to a short exact sequence: \begin{align} 0 \to \mathbb{E}^*(B,B') \to \mathbb{E}^*(B) \to  \mathbb{E}^*(B') \to 0 \end{align} 
Because all of these groups are finite free over $\mathbb{E}_*$, we learn that  \begin{align} \label{eq:exactsequence} 0 \to \mathbb{E}^*(B,B')\otimes_{\mathbb{E}_{*}}\mathbb{E}^*(M) \to \mathbb{E}^*(B)\otimes_{\mathbb{E}_{*}}\mathbb{E}^*(M)  \to  \mathbb{E}^*(B')\otimes_{\mathbb{E}_{*}}\mathbb{E}^*(M)  \to 0 \end{align}  is also exact. 
Let $(\mathbb{E}^*(B,B')\otimes_{\mathbb{E}_{*}}\mathbb{E}^*(M))^{q}$ denote the degree $q$ piece of the tensor product and similarly for the other groups. Then there is a commutative diagram:
\[
\xymatrix{
\cdots \ar[r] & (\mathbb{E}^*(B,B')\otimes_{\mathbb{E}_{*}}\mathbb{E}^*(M))^{q}  \ar[d]^\phi \ar[r] & (\mathbb{E}^*(B)\otimes_{\mathbb{E}_{*}}\mathbb{E}^*(M))^{q}  \ar[d]^\phi \ar[r] & (\mathbb{E}^*(B')\otimes_{\mathbb{E}_{*}}\mathbb{E}^*(M))^{q}  \ar[r] \ar[d]^\phi  &\cdots  \\
\cdots \ar[r] &\mathbb{E}^q(P,P') \ar[r] & \mathbb{E}^q(P)\ar[r] & \mathbb{E}^q(P') \ar[r] & \cdots} \]
The top row is exact by \eqref{eq:exactsequence} and \eqref{eq:connectingzero} and the commutativity of the diagram follows as in the proof of \cite[Theorem 4D.1]{Hatcher}. It is then easy to establish that $\phi: \mathbb{E}^*(B,B')\otimes_{\mathbb{E}_{*}}\mathbb{E}^*(M) \to \mathbb{E}^*(P,P')$ is an isomorphism by retracting everything on to $V_\alpha.$ Because $B'$ retracts onto the $m-2$ skeleton, the induction hypothesis implies that the map $\phi: \mathbb{E}^*(B')\otimes_{\mathbb{E}_{*}}\mathbb{E}^*(M) \to \mathbb{E}^*(P')$ is also an isomorphism. The claim now follows from the five-lemma.  \vskip 5 pt

\emph{Part (2):} We let $B^m$ denote the $m$-skeleton of $B$ and let $P^m \to B^m$ denote the restriction of $P$ to $B^m$. Using the splitting $s$, we can identify \begin{align} \phi^m: \mathbb{E}^*(M)\otimes_{\mathbb{E}_*} \mathbb{E}^*(B^m) \to \mathbb{E}^*(P^m)  \end{align} In particular, all of the restriction maps $\mathbb{E}^*(P^{m+1}) \to \mathbb{E}^*(P^m)$ are surjective. Applying the Milnor exact sequence together with the Mittag-Leffler property, we obtain isomorphisms: 
\begin{align} \varprojlim_m \mathbb{E}^*(M)\otimes_{\mathbb{E}_*} \mathbb{E}^*(B^m) \cong \varprojlim_m \mathbb{E}^*(P^m) \cong \mathbb{E}^*(P). \end{align}
Finally, we note that the same argument shows that the inverse system $\varprojlim_m \mathbb{E}^*(M)\otimes_{\mathbb{E}_*} \mathbb{E}^*(B^m)$ is canonically isomorphic to  $\mathbb{E}^*(M \times B)$, proving the claim. 

 \end{proof}

\begin{rem} One can also use standard spectral sequence arguments to give a proof of Lemma \ref{lem:lerayhirsch}. \end{rem}

\begin{cor} \label{cor:productbundle}  Suppose that we have a Hamiltonian fibration $M \xrightarrow{i} P \xrightarrow{\pi} B$, where $B$ is a product of Grassmannians. Then there is a non-canonical isomorphism of $\mathbb{E}^*(B)$ modules:  \begin{align} \mathbb{E}^*(M)\otimes_{\mathbb{E}_*} \mathbb{E}^*(B) \to \mathbb{E}^*(P) \end{align}  \end{cor}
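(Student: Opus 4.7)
The plan is to assemble the pieces already in place: the key inputs are Lemma \ref{lem:cohsplit} (products of Grassmannians are Hamiltonian c-split) and Lemma \ref{lem:lerayhirsch}(1) (a Leray--Hirsch theorem for c-split fibrations over finite CW complexes with only even cells). Essentially no new work should be required beyond verifying hypotheses.

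My first step will be to check that $B = \prod_{i} \mathrm{Gr}(k_i, n_i)$ is a finite CW complex with only even-dimensional cells. Each Grassmannian has the standard Schubert cell decomposition in which every cell is a complex affine space, hence of even real dimension; the product CW structure has cells whose dimensions are sums of the factor dimensions, and these sums remain even. Since each Grassmannian is compact, $B$ is finite.

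Next, Lemma \ref{lem:cohsplit} applied to the given Hamiltonian fibration $M \xrightarrow{i} P \xrightarrow{\pi} B$ produces an $\mathbb{E}_*$-linear c-splitting $s: \mathbb{E}^*(M) \to \mathbb{E}^*(P)$ with $i^* \circ s = \operatorname{id}$. All hypotheses of Lemma \ref{lem:lerayhirsch}(1) are now met, so the natural map
$$\phi: \mathbb{E}^*(M) \otimes_{\mathbb{E}_*} \mathbb{E}^*(B) \longrightarrow \mathbb{E}^*(P), \qquad a \otimes b \longmapsto s(a) \cdot \pi^*(b),$$
is an isomorphism. The last thing to observe is that $\phi$ is tautologically a homomorphism of $\mathbb{E}^*(B)$-modules, where $\mathbb{E}^*(B)$ acts on the right-hand factor of the domain by multiplication and on $\mathbb{E}^*(P)$ through $\pi^*$; the compatibility is built into the definition of $\phi$ via the factor $\pi^*(b)$. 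This upgrades the $\mathbb{E}_*$-linear isomorphism of Lemma \ref{lem:lerayhirsch}(1) to the $\mathbb{E}^*(B)$-linear isomorphism asserted by the corollary.

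Since the genuinely substantive ingredients -- the Abouzaid--McLean--Smith splitting for $\mathbb{CP}^1$ (Theorem \ref{thm:AMS}), the birational/tower bootstrapping through Bott--Samelson resolutions (Lemmas \ref{lem:blowups1}, \ref{lem:tower1}, \ref{lem:cohsplit}), and the Leray--Hirsch argument (Lemma \ref{lem:lerayhirsch}) -- are all proved above, no real obstacle remains. The only minor point is to confirm the $\mathbb{E}^*(B)$-module structure, but this is automatic from the explicit formula for $\phi$.
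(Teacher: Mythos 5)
Your proposal is correct and follows essentially the same route as the paper, which proves the corollary by combining the Hamiltonian c-splitting of products of Grassmannians (Lemma \ref{lem:cohsplit}) with the Leray--Hirsch statement of Lemma \ref{lem:lerayhirsch}. Your additional checks (even Schubert cells, finiteness, and the $\mathbb{E}^*(B)$-linearity of $\phi$ via $\pi^*$) are exactly the implicit verifications behind the paper's one-line argument.
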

\begin{proof} This follows immediately by combining Lemma \ref{lem:cohsplitproj} and Lemma \ref{lem:lerayhirsch}.  \end{proof}

\begin{lemma} \label{lem:splitting}
Suppose that we have a c-split fibration $M \xrightarrow{i} P \xrightarrow{\pi} B$, where $B$ is a finite type CW-complex with only even dimensional cells. Let $i:B_0 \subset B$ be a sub-complex, $P_0 \to B_0$ denote the restriction of the fibration to $B_0$, and $\iota: P_0 \to P$ the inclusion.  
    Given a fixed c-splitting $s_{0}: \mathbb{E}^*(M) \to \mathbb{E}^*(P_{0})$, we can find a c-splitting $s: \mathbb{E}^*(M) \to \mathbb{E}^*(P)$  such that
    \begin{equation}\label{eq:splittings}
        s_0 = \iota^* \circ s.
    \end{equation}
\end{lemma}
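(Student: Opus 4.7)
The plan is to start from any c-splitting $s':\mathbb{E}^*(M)\to\mathbb{E}^*(P)$ (which exists by the assumption that the fibration is c-split) and then correct it so that it restricts to the prescribed $s_0$ on $P_0$. By Lemma~\ref{lem:lerayhirsch}, the splitting $s'$ gives an isomorphism $\Phi:\mathbb{E}^*(M)\otimes_{\mathbb{E}_*}\mathbb{E}^*(B)\xrightarrow{\sim}\mathbb{E}^*(P)$, $m\otimes b\mapsto s'(m)\cdot\pi^*(b)$. Since $\iota^*\circ s'$ is itself a c-splitting of the restricted fibration $P_0\to B_0$, Lemma~\ref{lem:lerayhirsch} also produces an isomorphism $\Phi_0':\mathbb{E}^*(M)\otimes_{\mathbb{E}_*}\mathbb{E}^*(B_0)\xrightarrow{\sim}\mathbb{E}^*(P_0)$. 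Under these identifications, the pullback $\iota^*:\mathbb{E}^*(P)\to\mathbb{E}^*(P_0)$ corresponds to $\operatorname{id}_{\mathbb{E}^*(M)}\otimes r$, where $r:\mathbb{E}^*(B)\to\mathbb{E}^*(B_0)$ is the restriction map on the bases.

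The main technical input is an $\mathbb{E}_*$-linear section $\sigma$ of $r$. Since $B_0\subset B$ is a subcomplex of a CW complex with only even cells, the quotient $B/B_0$ again has only even cells, so $H^*(B,B_0)$ is concentrated in even degrees. Arguing exactly as in Lemma~\ref{lem:lerayhirsch}, the connecting maps in the long exact sequence of the pair vanish, yielding a short exact sequence
\begin{equation*}
0\to\mathbb{E}^*(B,B_0)\to\mathbb{E}^*(B)\xrightarrow{\;r\;}\mathbb{E}^*(B_0)\to 0.
\end{equation*}
Lemma~\ref{lem:AtiyahHirzebruch} combined with induction on the Atiyah--Hirzebruch filtration shows $\mathbb{E}^*(B_0)$ is free as an $\mathbb{E}_*$-module, so this sequence splits $\mathbb{E}_*$-linearly and produces the desired section $\sigma$.

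The given splitting $s_0$ corresponds under $\Phi_0'$ to a unique $\mathbb{E}_*$-linear map $T:\mathbb{E}^*(M)\to\mathbb{E}^*(M)\otimes_{\mathbb{E}_*}\mathbb{E}^*(B_0)$ with $\Phi_0'\circ T=s_0$, and I set
\begin{equation*}
s\ :=\ \Phi\circ(\operatorname{id}_{\mathbb{E}^*(M)}\otimes\sigma)\circ T\ :\ \mathbb{E}^*(M)\to\mathbb{E}^*(P).
\end{equation*}
The relation $r\circ\sigma=\operatorname{id}$ immediately gives $\iota^*\circ s=\Phi_0'\circ(\operatorname{id}\otimes r\sigma)\circ T=\Phi_0'\circ T=s_0$. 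Choosing the reference fiber of $P\to B$ over a point of $B_0$, the fiber inclusion $i:M\to P$ factors as $\iota\circ i_0$ with $i_0:M\to P_0$, and hence $i^*\circ s=i_0^*\circ(\iota^*\circ s)=i_0^*\circ s_0=\operatorname{id}_{\mathbb{E}^*(M)}$, verifying that $s$ is a c-splitting extending $s_0$. The only nontrivial step is the existence of $\sigma$, and this presents no serious obstacle beyond adapting the even-cell Atiyah--Hirzebruch argument of Lemma~\ref{lem:AtiyahHirzebruch} to the CW-pair $(B,B_0)$; the rest is a formal manipulation of the Leray--Hirsch identifications.
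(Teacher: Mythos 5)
Your proof is correct and essentially identical to the paper's argument: both choose an $\mathbb{E}_*$-linear section of the restriction $\mathbb{E}^*(B)\to\mathbb{E}^*(B_0)$ (using evenness and freeness), transport it through the Leray--Hirsch identifications of Lemma \ref{lem:lerayhirsch} to an $\mathbb{E}_*$-linear section $\tau$ of $\iota^*$, and define $s=\tau\circ s_0$. Your write-up merely makes explicit two points the paper leaves implicit, namely the surjectivity of $\mathbb{E}^*(B)\to\mathbb{E}^*(B_0)$ and the verification that $i^*\circ s=\operatorname{id}$ via a reference fiber over $B_0$.
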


\begin{proof}
Let $\tilde{s}$ be the c-splitting over $B$. This gives rise to a commutative diagram:
 \[
\xymatrix{
 \phi:\mathbb{E}^*(M)\otimes_{\mathbb{E}_*}  \mathbb{E}^*(B)\ar[r] \ar[d]^{\operatorname{id} \otimes i^*} & \mathbb{E}^*(P)\ar[d]^{\iota^*}   \\ 
   \phi: \mathbb{E}^*(M)\otimes_{\mathbb{E}_*}  \mathbb{E}^*(B_0) \ar[r] & \mathbb{E}^*(P_0)}  \]  
where the horizontal arrows are isomorphisms. Thus, because $\mathbb{E}^*(B)$, $\mathbb{E}^*(B_0)$ are free, we may choose a splitting of $\mathbb{E}_*$ modules $\mathbb{E}^*(B_0)\to  \mathbb{E}^*(B)$ for $i^*$. This defines a splitting $\tau: \mathbb{E}^*(P_{0})\hookrightarrow \mathbb{E}^*(P)$ of $\mathbb{E}_*$- modules for $\iota^*.$ We can thus split the cohomology $\mathbb{E}^*(P)$ as \begin{align}\mathbb{E}^*(P) \cong \tau(\mathbb{E}^*(P_{0}))\oplus \operatorname{ker}(\iota^*)\end{align} 

We then define a new splitting $s:\mathbb{E}^*(M) \to \mathbb{E}^*(P)$ using $s_{0}$ by letting $s=(\tau\circ s_0,0)$ in this direct sum decomposition. By construction $s$ satisfies \eqref{eq:splittings}.

\end{proof}

\emph{Proof of Theorem 1.2:} \vskip 5 pt 
  \emph{Part (1):} Recall that there is a convenient model for the total space of the universal $U(k)$-bundle, $EU(k)$, given by taking the set of orthonormal $k$-frames in a complex Hilbert space $\mathcal{H}.$ Topologically, this is the colimit of spaces of $k$-frames in $\mathbb{C}^n$  as $n \to \infty$:  $$ EU(k)_1 \subset \cdots \subset EU(k)_n \subset \cdots .$$ The group $U(k)$ acts freely on this space and the quotient is the infinite Grassmannian of \(k\)-planes $Gr_k(\mathcal{H}).$ Given a product of unitary groups $G=\prod U(k_i)$, we can similarly take $EG:=\prod EU(k_i).$ The classifying space for $G$, $BG$, therefore admits finite dimensional approximations by spaces $BG_n$ which are products of Grassmannians.  We set $M_G:=M\times_G EG$ and $M_{G,n}:=M\times_G EG_n$. In view of Lemma \ref{lem:cohsplit}, we have c-splittings $s_n: \mathbb{E}^*(M) \to \mathbb{E}^*(M_{G,n})$ which by Corollary \ref{cor:productbundle} give rise to identifications: \begin{align} \label{eq:EGn} \mathbb{E}^*(M_{G,n}) \cong \mathbb{E}^*(M)\otimes \mathbb{E}^*( BG_n).\end{align} The identifications from \eqref{eq:EGn} imply that the restrictions  $\mathbb{E}^*(M_{G,n+1}) \to  \mathbb{E}^*(M_{G,n})$ are surjective and we have that \begin{align}\mathbb{E}^*(M_G) \cong \varprojlim_n\mathbb{E}^*(M_{G,n}). \end{align} By Lemma \ref{lem:splitting}, we can arrange that the c-splittings $s_n$ are compatible with restriction and thus give rise to a splitting:  \begin{align}s: \mathbb{E}^*(M) \to \mathbb{E}^*(M_G). \end{align}
  Using Lemma \ref{lem:lerayhirsch} (ii) (by construction the model for $BG$ has only even-dimensional cells), we obtain that
\begin{align} \mathbb{E}^*(M_G) \cong \mathbb{E}^*(M \times BG). \end{align}
 It remains to prove that the Leray-spectral sequence $\lbrace E_r^{p,q}(M_{G}) \rbrace$ for the fibration $M_G \to BG$ degenerates. It follows easily from \eqref{eq:EGn} that the Leray spectral sequences $\lbrace E_r^{p,q}(M_{G,n}) \rbrace$ associated to the fibrations $M_{G,n} \to BG_n$ degenerate. The $E_2$ page $E_2^{p,q}(M_{G})$ is an inverse limit of the $E_2$ pages $E_2^{p,q}(M_{G,n})$, which implies that the differential vanishes on $E_2^{p,q}(M_{G})$. The same argument then implies that the differentials on the subsequent pages $E_r^{p,q}(M_{G})$ are zero, which proves degeneration.  
\vskip 5 pt
\emph{Part (2):} 
We let $EG$ be any CW-complex on which $G$ (and hence $T \subset G$) acts freely. We again set $M_T=M\times_T EG$ and $M_G:=M\times_G EG$ and consider the diagram \[
\xymatrix{
 M_T \ar[d] \ar[r]^{r} & M_G \ar[d]  \\
    BT \ar[r] &BG} \] 

We let $\lbrace E_r^{p,q}(M_T) \rbrace$ and $\lbrace E_r^{p,q}(M_G) \rbrace$ be the Leray spectral sequences associated to the fibrations over the respective Borel spaces $BT$ and $BG$.  By functoriality of the Leray spectral sequence, $r$ induces maps $r^*: E_r(M_G) \to E_r(M_T).$ We can identify the $E_2$ pages of the spectral sequence with $E_2(M_G)=\mathbb{E}^*(M)\otimes_\mathbf{k}H^*(BG,\mathbf{k})$ and $E_2(M_T)=\mathbb{E}^*(M)\otimes_\mathbf{k}H^*(BT,\mathbf{k})$ respectively. By hypothesis $r^*$ is injective on the $E_2$ pages and hence there are no differentials on $E_2(M_G).$ The same argument shows there are therefore no differentials on subsequent pages and thus the spectral sequence $E_r^{p,q}(M_G)=>\mathbb{E}^*(M_G)$ degenerates at the second page as required. \qed \vskip 5 pt

\begin{rem} We note that in case of classical cohomology, the argument for Part (1) actually simplifies a little bit. This is because we can choose $BG_n$ large enough so that the restriction map  \begin{align}\label{eq:splittingEGn} j^*:H^d(M_G) \to  H^d(M_{G,n}) \text{ is an isomorphism in all degrees } d \leq \operatorname{dim}(M). \end{align} By Lemma \ref{lem:cohsplit}, we can produce a c-splitting $\bar{s}: H^*(M) \to H^*(M_{G,n})$ for the fibration $M_{G,n} \to BG_n.$ Then by composing with the inverse maps to \eqref{eq:splittingEGn}, we get a c-splitting $s: H^*(M) \to H^*(M_G)$. \end{rem}

We have the following Corollary.
\begin{cor} \label{cor:randomthing} Let $T \cong (S^1)^m$ be a torus and $M$ a symplectic manifold with Hamiltonian $T$-action. There is an isomorphism of $\mathbb{E}^*(BT) \cong \mathbb{E}_*[\![u_1,\cdots,u_m]\!]$ modules:
\begin{align} \mathbb{E}^*_T(M) \cong \mathbb{E}^*(M)[\![u_1,\cdots,u_m]\!]. \end{align} \end{cor}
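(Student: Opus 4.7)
The plan is to deduce this as a direct corollary of Theorem 1.2 (Part 1), since the torus $T \cong (S^1)^m = U(1)^m$ is a product of unitary groups. Applying Part (1) of Theorem 1.2 to the Hamiltonian $T$-action on $M$ immediately yields an isomorphism
\begin{equation*}
    \mathbb{E}^*_T(M) \cong \mathbb{E}^*(M \times BT)
\end{equation*}
of $\mathbb{E}^*(BT)$-modules. Thus the only remaining task is to identify the right-hand side with $\mathbb{E}^*(M)[\![u_1,\dots,u_m]\!]$ in a way that respects the $\mathbb{E}^*(BT) \cong \mathbb{E}_*[\![u_1,\dots,u_m]\!]$-module structure.

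For this identification, I would use the standard finite-dimensional approximation $BT_n := (\mathbb{CP}^n)^m$ of $BT$, which is a finite-type CW-complex with only even cells. By the complex-oriented calculation at the start of Section 2 (and truncation), one has
\begin{equation*}
    \mathbb{E}^*(BT_n) \cong \mathbb{E}_*[u_1,\dots,u_m]/(u_1^{n+1},\dots,u_m^{n+1}),
\end{equation*}
and each $\mathbb{E}^*(BT_n)$ is a finite free $\mathbb{E}_*$-module. Therefore Part (1) of Lemma \ref{lem:lerayhirsch}, applied to the trivial (hence Hamiltonian) fibration $M \times BT_n \to BT_n$, gives
\begin{equation*}
    \mathbb{E}^*(M \times BT_n) \cong \mathbb{E}^*(M) \otimes_{\mathbb{E}_*} \mathbb{E}^*(BT_n).
\end{equation*}
The restriction maps $\mathbb{E}^*(M \times BT_{n+1}) \to \mathbb{E}^*(M \times BT_n)$ are surjective, so the Mittag-Leffler/Milnor exact sequence argument used in Part (2) of Lemma \ref{lem:lerayhirsch} identifies
\begin{equation*}
    \mathbb{E}^*(M \times BT) \;\cong\; \varprojlim_n \mathbb{E}^*(M) \otimes_{\mathbb{E}_*} \mathbb{E}^*(BT_n) \;\cong\; \mathbb{E}^*(M)[\![u_1,\dots,u_m]\!],
\end{equation*}
where in the last step the inverse limit of polynomial truncations yields the formal power series ring, and compatibility with the $\mathbb{E}^*(BT)$-module structure follows by naturality of the maps involved.

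There is essentially no obstacle here: the work has all been done in Theorem 1.2 and Lemma \ref{lem:lerayhirsch}. The only minor point worth care is making sure the module structure over $\mathbb{E}^*(BT)$ passes correctly through the inverse limit, which is immediate from the compatibility of the finite-level isomorphisms with the projections $BT \to BT_n$.
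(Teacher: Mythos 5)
Your proposal is correct and follows essentially the same route as the paper: the paper likewise invokes the splitting $\mathbb{E}^*_T(M) \cong \mathbb{E}^*(M\times BT)$ from Theorem \ref{thm:general} (1) and then cites the identification $\mathbb{E}^*(M\times BT)\cong \mathbb{E}^*(M)[\![u_1,\dots,u_m]\!]$, which you merely spell out via finite approximations and the Milnor sequence exactly as in Lemma \ref{lem:lerayhirsch} (2).
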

\begin{proof} By equivariant formality, $\mathbb{E}^*_T(M) \cong \mathbb{E}^*(M\times BT).$ We also have an identification $\mathbb{E}^*(M\times BT) \cong \mathbb{E}^*(M)[\![u_1,\cdots,u_m]\!]$ from which the result follows.  \end{proof}
\vskip 5 pt

\section{Localization and GKM}
\subsection{HKR theories}

We next turn to discussing a particular class of complex-oriented cohomology theories. Throughout the discussion, we fix a prime number $p$. We let 
$\mathbb{Z}_p$ denote the $p$-adic integers and let $\mathbb{Z}_{(p)}=\mathbb{Z}_p \cap \mathbb{Q}.$ Suppose we have  complex-oriented cohomology theory $\mathbb{E}^*$ where $\mathbb{E}_*$ is a local ring with graded maximal ideal $\mathfrak{m}$ such that $p \in \mathfrak{m}$. For such $\mathbb{E}^*$,  the mod $\mathfrak{m}$ reduction, $L_{0}$, of $L_{\mathbb{E}}$ is a formal group over a graded field of characteristic $p > 0$. The height of such a cohomology theory  is the height of the formal group law $L_{0}$, which we denote by $n$.\begin{defn}\label{defn:HKR} We say that  a complex-oriented cohomology theory $\mathbb{E}^*$ is of \emph{HKR-type} if $\mathbb{E}_*$ is a Noetherian, complete, local ring with (graded-) maximal ideal $\mathfrak{m}$ such that $p \in \mathfrak{m}$ and $\mathbb{E}^*$ is of finite height $n<\infty$. \end{defn} Cohomology theories of this kind were considered by Hopkins--Kuhn--Ravenel \cite[\S 5]{HKR}. Many interesting examples of such theories arise from Johnson--Wilson cohomology $E(n)^*$, which is a cohomology theory with coefficients: \begin{align} E(n)_*= \mathbb{Z}_{(p)}[v_1,\cdots, v_{n-1},v_n,v_n^{-1}],\end{align} where the periodic variable $v_n$ plays a special role and has degree $|v_n|= 2(p^n-1)$. From $E(n)^*$, one can construct the following examples of HKR-type cohomology theories: :\begin{enumerate} \item The $I_n$-adically complete version of $E(n)^*$, $\hat{E}(n)^*$, (\cite[\S 1.3]{HKR}) is a cohomology theory with coefficients the completion of $E(n)^*$ at the ideal $$I_n=(p,v_1,\cdots,v_{n-1}).$$ \item The integral lifts of Morava $K$-theory $\hat{K}(n)$ arise by taking the $p$-completion of the quotient $E(n)/(v_1,\cdots,v_{n-1}).$ See \cite[\S 1.3]{Greenlees} for more details. These theories have coefficients given by:  \begin{align} \hat{K}(n)_* = \mathbb{Z}_p[v_n,v_n^{-1}]. \end{align} \item Finally the Morava $K$-theories $K(n)$ are the cohomology theories which arise by further quotienting by $p$. These cohomology theories have coefficients given by \begin{align} K(n)_{*} = \mathbb{F}_p[v_n,v_n^{-1}]. \end{align} \end{enumerate}

The key to the localization result is the following computation of the $\mathbb{E}^*$-theory for classifying spaces of finite abelian groups from \cite[\S 5.2]{HKR}.

\begin{lemma} \label{lem:MoravaBA} Let $\mathbb{E}^*$ be an HKR theory of height $n<\infty$. Let $A$ be a finite cyclic group of order $\ell$. Write $\ell=p^rs$ with $(s,p^r)=1$. Then \begin{enumerate} \item $\mathbb{E}^*(BA)$ is free over $\mathbb{E}_*$ of rank $p^{rn}$.  \item In the case $\mathbb{E}^*=K(n)$ is the Morava $K$-theory, we have: \begin{align} \label{eq:moravanilpotent} \mathbb{E}^*(BA) \cong \mathbb{E}_*[[u]]/(u^{p^{rn}}). \end{align} \end{enumerate}  \end{lemma}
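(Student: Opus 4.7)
My plan is to reduce to the case of a cyclic $p$-group and then exploit the formal group law structure of $\mathbb{E}^*$. Since $\gcd(p^r,s)=1$, the Chinese Remainder Theorem gives $A \cong \mathbb{Z}/p^r \times \mathbb{Z}/s$ and hence $BA \simeq B\mathbb{Z}/p^r \times B\mathbb{Z}/s$. For the prime-to-$p$ factor, $s$ is a unit in $\mathbb{E}_*$ because $\mathbb{E}_*$ is local with $p\in\mathfrak{m}$. The Atiyah--Hirzebruch spectral sequence for $\mathbb{E}^*(B\mathbb{Z}/s)$ has $E_2$-term $H^*(B\mathbb{Z}/s;\mathbb{E}_*)$, and since $H^{2i}(B\mathbb{Z}/s;\mathbb{Z}) \cong \mathbb{Z}/s$ for $i\geq 1$ is killed after tensoring with $\mathbb{E}_*$, this $E_2$-term is concentrated in degree zero. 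Therefore $\mathbb{E}^*(B\mathbb{Z}/s) \cong \mathbb{E}_*$.

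The substantive calculation is $\mathbb{E}^*(B\mathbb{Z}/p^r)$. I would use the principal fiber sequence
\begin{equation*}
B\mathbb{Z}/p^r \longrightarrow BS^1 \xrightarrow{\ [p^r]\ } BS^1
\end{equation*}
arising from $1\to \mathbb{Z}/p^r \to S^1 \xrightarrow{\cdot p^r} S^1 \to 1$. The induced map on $\mathbb{E}^*(BS^1) = \mathbb{E}_*[\![u]\!]$ sends $u$ to $[p^r]\cdot_{\mathbb{E}} u$. Because the leading term of $[p^r]\cdot_{\mathbb{E}} u$ is $p^r u$, this is a non-zero-divisor in $\mathbb{E}_*[\![u]\!]$, so a length-one Koszul resolution computes the Eilenberg--Moore spectral sequence and yields
\begin{equation*}
\mathbb{E}^*(B\mathbb{Z}/p^r) \;\cong\; \mathbb{E}_*[\![u]\!]\big/\bigl([p^r]\cdot_{\mathbb{E}} u\bigr).
\end{equation*}

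Next I would apply Weierstrass preparation in the Noetherian complete local ring $\mathbb{E}_*[\![u]\!]$. The height-$n$ hypothesis says that mod $\mathfrak{m}$ we have $[p]\cdot_{\mathbb{E}} u \equiv c_1 u^{p^n}$ plus higher-order terms, with $c_1$ a unit of $\mathbb{E}_*/\mathfrak{m}$. Iterating this identity by substituting the $p$-series into itself $r-1$ times gives $[p^r]\cdot_{\mathbb{E}} u \equiv c u^{p^{rn}}$ modulo higher-order terms in $u$, for a unit $c$. Weierstrass preparation then produces a factorization $[p^r]\cdot_{\mathbb{E}} u = \varepsilon\cdot f(u)$ with $\varepsilon \in \mathbb{E}_*[\![u]\!]^{\times}$ and $f(u)$ a distinguished polynomial of degree $p^{rn}$. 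Consequently $\mathbb{E}_*[\![u]\!]/([p^r]\cdot_{\mathbb{E}} u) \cong \mathbb{E}_*[u]/(f(u))$ is a free $\mathbb{E}_*$-module of rank $p^{rn}$, which combined with the coprime factor proves part (1). For part (2) one chooses the Honda orientation so that $[p]\cdot_{\mathbb{E}} u = v_n u^{p^n}$ on the nose; iterating gives $[p^r]\cdot_{\mathbb{E}} u = c u^{p^{rn}}$ for a unit $c \in K(n)_*$, and combining with $K(n)^*(B\mathbb{Z}/s) = K(n)_*$ yields $K(n)^*(BA) \cong K(n)_*[\![u]\!]/(u^{p^{rn}})$.

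The principal obstacle is the Weierstrass step: one must check that the iteration of $[p]\cdot_{\mathbb{E}} u$ preserves the leading-term shape modulo $\mathfrak{m}$ and produces exactly a distinguished polynomial of degree $p^{rn}$. This relies on the compatibility of the $p$-series under composition and the rigidity of height-$n$ formal group laws over $\mathbb{E}_*/\mathfrak{m}$; the Eilenberg--Moore collapse itself is straightforward once one checks the non-zero-divisor condition.
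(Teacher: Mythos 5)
Your route is essentially the paper's route with the citations unpacked: the paper proves Part (1) by quoting \cite[Cor.\ 5.8]{HKR}, obtains the presentation $\mathbb{E}^*(B\mathbb{Z}/p^r)\cong\mathbb{E}_*[\![u]\!]/([p^r]\cdot_{\mathbb{E}}u)$ from \cite[Lemma 5.7]{HKR}, reduces from $A$ to its $p$-part (there via the freeness of Part (1) rather than via your observation that $s$ is invertible), and then iterates the Honda $p$-series $[p]\cdot_{\mathbb{E}}u=v_nu^{p^n}$ exactly as you do for Part (2). What you do differently is to re-derive the two HKR inputs: the prime-to-$p$ factor by the Atiyah--Hirzebruch argument, the presentation via the fibration $B\mathbb{Z}/p^r\to BS^1\xrightarrow{[p^r]}BS^1$, and freeness of rank $p^{rn}$ by Weierstrass preparation --- which is indeed how Hopkins--Kuhn--Ravenel prove it. So the mathematical content matches; the value of your version is self-containedness.

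Three points of rigor, one of which is a genuine (though repairable) error. First, your justification that $[p^r]\cdot_{\mathbb{E}}u$ is a non-zero-divisor ``because its leading term is $p^ru$'' is wrong: $p\in\mathfrak{m}$, so $p^r$ may be a zero-divisor or even zero in $\mathbb{E}_*$ --- for $K(n)$ one has $p=0$ and the linear term vanishes identically. The correct order of argument is to run Weierstrass preparation \emph{first}: by the height-$n$ hypothesis the reduction of $[p^r]\cdot_{\mathbb{E}}u$ modulo $\mathfrak{m}$ has lowest term a unit times $u^{p^{rn}}$, so $[p^r]\cdot_{\mathbb{E}}u=\varepsilon f$ with $f$ distinguished of degree $p^{rn}$, and the uniqueness clause in Weierstrass division shows such an element is a non-zero-divisor in $\mathbb{E}_*[\![u]\!]$. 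Second, convergence of the Eilenberg--Moore spectral sequence for generalized cohomology is not automatic and you do not address it; the standard way to get the presentation (and the route behind \cite[Lemma 5.7]{HKR}) is the Gysin sequence of the sphere bundle $S(L^{\otimes p^r})\simeq B\mathbb{Z}/p^r$ over $BS^1$, whose Euler class is $[p^r]\cdot_{\mathbb{E}}u$; once that class is a non-zero-divisor the long exact sequence splits into short exact sequences with no convergence issues. Third, the phrase ``combined with the coprime factor'' hides a K\"unneth-type step: knowing $\mathbb{E}^*(B\mathbb{Z}/s)\cong\mathbb{E}_*$ does not by itself compute $\mathbb{E}^*(B\mathbb{Z}/p^r\times B\mathbb{Z}/s)$. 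The quick fix is that $s$ invertible forces the reduced $\mathbb{E}$-homology of $B\mathbb{Z}/s$ to vanish, hence $\mathbb{E}\wedge B\mathbb{Z}/s$ is contractible and the projection $BA\to B\mathbb{Z}/p^r$ induces an isomorphism on $\mathbb{E}^*$. With these repairs your argument is complete and matches the cited results the paper relies on.
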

\begin{proof} Part (1) is established in \cite[Cor. 5.8]{HKR}. To establish Part (2), write $\ell=p^rs$ with $(s,p^r)=1$. Note that by by Part (1), we have that the pull-back map \begin{align} \mathbb{E}^*(BA) \to \mathbb{E}^*(B\mathbb{Z}/p^r\mathbb{Z})  \end{align} is an isomorphism. Thus, it suffices to consider the case where $\ell=p^r.$ View $A$ as a finite subgroup $A \subset S^1$. Regard $\mathbb{E}^*(BA)$ as a module over $\mathbb{E}_*[\![u]\!]$ via the map $BA \to BS^1$. By \cite[Lemma 5.7]{HKR} we have that \begin{align} \label{eq:EBA1} \mathbb{E}^*(BA) \cong \mathbb{E}_*[\![u]\!]/([p^r]\cdot_{\mathbb{E}}u). \end{align} The formal group law $L_\mathbb{E}$ for Morava $K$-theory is essentially characterized by the fact that its $p$-series satisfies: \begin{align} \label{eq:Hondagrouplaw} [p]\cdot_{\mathbb{E}}u = v_n u^{p^{n}}.\end{align}  Iterating the formal group law, we obtain \begin{align} [p^r] \cdot_\mathbb{E} u =v_n^{d(r)} u^{p^{rn}}. \end{align}
for some integer $d(r)>0$. The result follows immediately from this. 
 \end{proof} 

\subsection{Localization}

\begin{defn} \label{defn:HRKfinite} An HKR theory $\mathbb{E}^*$ is \emph{integral of finite Krull dimension} if the coefficient ring  $\mathbb{E}_*$ is an integral domain of finite Krull dimension. \end{defn}

\begin{rem} All of the examples $\hat{E}(n)^*,\hat{K}(n)^*, K(n)^* $ are integral of finite Krull dimension. \end{rem}
 
Fix an HKR cohomology theory which integral and of finite Krull dimension. The goal of this section is to extend the Atiyah--Bott localization theorem (\cite[Theorem 3.5]{Atiyah-Bott}) to $\mathbb{E}^*$ coefficients.  Throughout this section we take our group to be a torus $T \cong (S^1)^m$. We have by \cite[\href{https://stacks.math.columbia.edu/tag/0BNH}{Tag 0BNH}]{stacks-project} that $\mathbb{E}^*(BT)$ is Noetherian\footnote{Strictly speaking \cite[\href{https://stacks.math.columbia.edu/tag/0BNH}{Tag 0BNH}]{stacks-project} considers standard completions as opposed to graded completions. However the result and proof apply without change to graded completions. } and it is also an integral domain.  We may therefore set $R=\operatorname{Frac}(\mathbb{E}^*(BT))$ to be the fraction field of $\mathbb{E}^*(BT).$ We have the following version of the Atiyah-Bott support lemma (cf. \cite[Lemma 3.4]{Atiyah-Bott}): 

\begin{lemma} \label{lem:ABsupport} Let $X=T/K$ be an orbit of $T$, where $K$ is a closed subgroup of $T$. Let $\operatorname{supp}(\mathbb{E}^*_T(X)) \subset \operatorname{Spec}(\mathbb{E}^*(BT))$ denote the support of $\mathbb{E}^*_T(X)$ as a module over $\mathbb{E}^*(BT)).$ Then: \begin{enumerate} \item $\operatorname{supp}(\mathbb{E}^*_T(X))$ is a closed subspace of codimension $\operatorname{dim}(T)-\operatorname{dim}(K)$. \item In the case where $\mathbb{E}^*=K(n)^*$ is Morava K-theory, $\operatorname{supp}(\mathbb{E}^*_T(T/K))=\operatorname{supp}(\mathbb{E}^*_T(T/K^0))$, where $K^0 \subset K$ is the connected component of $K$ at the identity.  \end{enumerate}  \end{lemma}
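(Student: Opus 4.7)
\emph{Setup and reduction.} I would begin from the canonical identification $\mathbb{E}^*_T(T/K)\cong \mathbb{E}^*(BK)$, with the $\mathbb{E}^*(BT)$-module structure induced by the classifying map $BK\to BT$. Since $\mathbb{E}^*(BK)$ is a unital ring, its annihilator equals the kernel of this map, so
$$
\operatorname{supp}\bigl(\mathbb{E}^*_T(T/K)\bigr)=V\bigl(\ker(\mathbb{E}^*(BT)\to \mathbb{E}^*(BK))\bigr).
$$
Using the structure theorem for compact abelian Lie groups, write $K\cong K^0\times A$ with $K^0\cong(S^1)^d$ and $A$ finite abelian, and pick a complementary subtorus to get $T\cong K^0\times T'$ with $T'\cong(S^1)^{m-d}$; then $A$ embeds into $T'$. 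By Lemma \ref{lem:MoravaBA}(1), $\mathbb{E}^*(BA)$ is a finite free $\mathbb{E}_*$-module (concentrated in even degrees), so K\"unneth applies and yields
$$
\mathbb{E}^*(BK)\cong \mathbb{E}^*(BK^0)\otimes_{\mathbb{E}_*}\mathbb{E}^*(BA),\qquad \mathbb{E}^*(BT)\cong \mathbb{E}^*(BK^0)\,\widehat{\otimes}_{\mathbb{E}_*}\mathbb{E}^*(BT'),
$$
with the restriction factoring as $\mathrm{id}\,\widehat{\otimes}\,f$ for $f\colon \mathbb{E}^*(BT')\to \mathbb{E}^*(BA)$ induced by $A\hookrightarrow T'$. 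The entire problem thus reduces to studying $\ker f$ inside the power-series ring $\mathbb{E}^*(BT')\cong \mathbb{E}_*[\![v_1,\dots,v_{m-d}]\!]$.

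\emph{Part (1).} The key computation is that the image of $f$ is an $\mathbb{E}_*$-subalgebra of $\mathbb{E}^*(BA)$, which is finite (hence integral) over $\mathbb{E}_*$; this forces $\dim\bigl(\mathbb{E}^*(BT')/\ker f\bigr)=\dim \mathbb{E}_*$. Tensoring back up with $\mathbb{E}^*(BK^0)=\mathbb{E}_*[\![u_1,\dots,u_d]\!]$ adjoins exactly $d$ to the Krull dimension on both sides, so
$$
\dim\bigl(\mathbb{E}^*(BT)/\ker(\mathrm{id}\,\widehat{\otimes}\,f)\bigr)=d+\dim\mathbb{E}_*=\dim\mathbb{E}^*(BT)-(m-d).
$$
To upgrade this Krull-dimension drop into the asserted codimension of a closed subspace, I would use that $\mathbb{E}^*(BT)$ is a complete Noetherian local integral domain (since $\mathbb{E}_*$ is), hence universally catenary and equidimensional, so ``$\dim - \dim$'' genuinely measures codimension.

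\emph{Part (2).} For $\mathbb{E}^*=K(n)^*$, Lemma \ref{lem:MoravaBA}(2) together with the K\"unneth decomposition coming from the primary factorization of $A$ shows that $K(n)^*(BA)$ is a local ring whose augmentation ideal is \emph{nilpotent}. Consequently $f(v_j)$ is nilpotent in $\mathbb{E}^*(BA)$ for every $j$, giving $v_j\in\sqrt{\ker f}$, and the reverse inclusion is immediate since $\ker f$ lies in the augmentation ideal of $\mathbb{E}^*(BT')$. Thus $\sqrt{\ker f}=(v_1,\dots,v_{m-d})$, and under the tensor decomposition above, $(v_1,\dots,v_{m-d})\,\mathbb{E}^*(BT)$ is exactly $\ker(\mathbb{E}^*(BT)\to\mathbb{E}^*(BK^0))$, which yields the equality of supports. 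The main obstacle I anticipate is the commutative-algebra bookkeeping in Part (1), namely confirming carefully that Krull dimensions add across the completed tensor products and that the codimension equals the Krull-dimension drop in our (possibly non-Cohen--Macaulay) setting---this is where the hypothesis that $\mathbb{E}^*$ is integral of finite Krull dimension really pays off.
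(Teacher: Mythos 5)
Your argument is correct in substance and shares the paper's overall skeleton (the identification $\mathbb{E}^*_T(T/K)\cong\mathbb{E}^*(BK)$, the splitting $T\cong K^0\times T'$ with $A=K\cap T'$ finite, the K\"unneth isomorphism, and Lemma \ref{lem:MoravaBA} supplying finiteness in Part (1) and nilpotency in Part (2)), but it diverges from the paper at one key step. The paper devotes most of its proof to showing that the restriction $\mathbb{E}^*(BT)\to\mathbb{E}^*(BK)$ is \emph{surjective} (an induction over the cyclic factors of $A$, choosing auxiliary circles $H'\subset T$ containing each factor), so that the support is literally the closed subscheme $\operatorname{Spec}(\mathbb{E}^*(BK))$; it then gets the codimension from the fact that $\mathbb{E}^*(BK)$ is finite over $\mathbb{E}^*(BK^0)$. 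You bypass the surjectivity argument entirely by observing that for a unital $\mathbb{E}^*(BT)$-algebra the support is $V(\ker)$, which is automatically closed, and you recover the dimension count from finiteness of $\operatorname{im}(f)$ over $\mathbb{E}_*$ --- a genuine simplification for the statement as written (surjectivity is not used downstream), at the mild cost of proving slightly less (the paper identifies the support with the scheme $\operatorname{Spec}\mathbb{E}^*(BK)$ itself). You also make explicit the point, left implicit in the paper, that ``codimension equals Krull-dimension drop'' because $\mathbb{E}^*(BT)$ is a complete Noetherian local domain, hence catenary. The one step you flag --- that dimensions behave under the completed tensor with $\mathbb{E}^*(BK^0)$ --- is indeed the place to be careful: rather than analyzing $\ker(\mathrm{id}\,\widehat{\otimes}\,f)$ directly, the cleanest fix is the paper's own device: the image of $\mathbb{E}^*(BT)\to\mathbb{E}^*(BK)$ contains $\mathbb{E}^*(BK^0)\otimes 1$ (restrict along the projection $K\to K^0$) and is a submodule of the finite free $\mathbb{E}^*(BK^0)$-module $\mathbb{E}^*(BK)$, hence is a finite integral extension of $\mathbb{E}^*(BK^0)$ and has dimension $\dim\mathbb{E}_*+d$. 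Similarly, in Part (2) the passage from $\sqrt{\ker f}=(v_1,\dots,v_{m-d})$ in $\mathbb{E}^*(BT')$ to the statement over $\mathbb{E}^*(BT)$ should be phrased as: each $v_j$ maps to a nilpotent element $1\otimes f(v_j)$ of $\mathbb{E}^*(BK)$, while $\ker(\mathbb{E}^*(BT)\to\mathbb{E}^*(BK))\subset(v_1,\dots,v_{m-d})$ because composing with restriction to $BK^0$ has exactly that kernel; since $(v_1,\dots,v_{m-d})$ is prime, the two radicals, hence the two supports, coincide --- this is the same nilpotency argument as the paper's, just spelled out.
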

\begin{proof}   We have that $\mathbb{E}_{T}^*(X) \cong \mathbb{E}^*(BK)$ and the induced $\mathbb{E}^*(BT)$ module structure on this group is induced by the inclusion $K \subset T$. We first argue that the map \begin{align}\mathbb{E}^*(BT) \to \mathbb{E}^*(BK) \end{align} is surjective so that $\operatorname{Spec}(\mathbb{E}^*(BK))$ is a closed subscheme of $\operatorname{Spec}(\mathbb{E}^*(BT))$.  As above, we let $K^0$ denote the connected component of $K$. After choosing a complementary torus $H$ to $K^0$, we may write $K=A \times K^0$, where $A$ is a finite abelian subgroup of $H$. The by the Kunneth isomorphism \cite[Cor. 5.11]{HKR}, we have \begin{align} \label{eq:Kunneth} \mathbb{E}^*(BA)\otimes \mathbb{E}^*(BK^0) \cong \mathbb{E}^*(BK). \end{align}  Moreover, the Kunneth map fits into a diagram: 
\[
\xymatrix{
   \mathbb{E}^*(BH)\otimes \mathbb{E}^*(BK^0) \ar[d] \ar[r] & \mathbb{E}^*(BT) \ar[d]  \\
    \mathbb{E}^*(BA)\otimes \mathbb{E}^*(BK^0)\ar[r] &\mathbb{E}^*(BK)} \] 
It therefore suffices to assume that $K=A$ is a finite abelian group. Moreover, by Part (1) of Lemma \ref{lem:MoravaBA} we can assume that $A \cong \mathbb{Z}/p^{r_1} \times \mathbb{Z}/p^{r_2} \times \cdots \times \mathbb{Z}/p^{r_i}$ is a product of $p$-groups. It is easy to see that we may find:\begin{itemize} \item a rank one subgroup $H' \cong S^1 \subset T$ so that $\mathbb{Z}/p^{r_1} \subset H'$ \item a complementary torus $H''$ (for $H'$)  together with a subgroup $A' \subset H''$ so that $A \cong \mathbb{Z}/p^{r_1} \times A'$ (the group $A' \cong \mathbb{Z}/p^{r_2} \times \cdots \times \mathbb{Z}/p^{r_i}$). \end{itemize} The surjectivity of the restriction $\mathbb{E}^*(BT) \to \mathbb{E}^*(BA)$ follows from the Kunneth isomorphism together with induction on the number of factors in the product decomposition of $A$. To prove the claim on dimensions, we note that the claim is obvious for connected groups where $K=K^0$. In view of the Kunneth isomorphism and Part (1) of Lemma \ref{lem:MoravaBA}, we have that $\mathbb{E}^*(BK)$ is finite over $\mathbb{E}^*(BK^0)$, which implies that they have equal dimensions \cite[\href{https://stacks.math.columbia.edu/tag/01WG}{Tag 01WG}]{stacks-project}. To prove Part (2) of the Lemma, we again write our group $K$ as $A\times K^0$. Without loss of generality, we can assume $A \cong \mathbb{Z}/p^{r_1} \times \mathbb{Z}/p^{r_2} \times \cdots \times \mathbb{Z}/p^{r_i}$. Then Part (2) of Lemma \ref{lem:MoravaBA} shows that $K(n)^*(BA)$ is a tensor product of the algebras in \eqref{eq:moravanilpotent}. In particular, $K(n)^*(BA)$ is a nilpotent extension of $K(n)_*$ for any finite abelian group. The claim now follows from the Kunneth isomorphism \eqref{eq:Kunneth}.\end{proof}



For the rest of this section, unless otherwise specified, let $M$ be a smooth compact manifold with $T$-action and denote by $M^{T}$ the set of fixed points of the $T$-action.

\begin{lemma} \label{lem:localization} Let $j: M^T \to M$ denote the inclusion of $M^T$ into $M$. Then the maps: \begin{align} j^*:\mathbb{E}_{T}^*(M)\otimes_{\mathbb{E}^*(BT)} R \to \mathbb{E}_{T}^*(M^T)\otimes_{\mathbb{E}^*(BT)} R \end{align} is an isomorphism. \end{lemma}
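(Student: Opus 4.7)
The plan is to adapt the classical Atiyah--Bott localization argument to the HKR setting, using Lemma \ref{lem:ABsupport} as the essential torsion input. Since $\mathbb{E}^*(BT)\cong\mathbb{E}_*[\![u_1,\ldots,u_m]\!]$ is a Noetherian integral domain, the functor $-\otimes_{\mathbb{E}^*(BT)}R$ is exact and kills precisely the torsion $\mathbb{E}^*(BT)$-modules. For any closed subgroup $K\subsetneq T$, Lemma \ref{lem:ABsupport}(1) says that $\mathbb{E}^*_T(T/K)\cong\mathbb{E}^*(BK)$ is supported on a proper closed subscheme of $\operatorname{Spec}(\mathbb{E}^*(BT))$; the proof of that lemma further produces a surjection $\mathbb{E}^*(BT)\twoheadrightarrow\mathbb{E}^*(BK)$, so the module is cyclic with non-zero annihilator. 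Consequently $\mathbb{E}_T^*(T/K)\otimes_{\mathbb{E}^*(BT)}R=0$ whenever $K\neq T$.

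Next I would invoke the equivariant slice theorem together with compactness of $M$ to construct a finite $T$-invariant open cover $M=U_0\cup V_1\cup\cdots\cup V_n$, where $U_0$ is a tubular neighborhood of $M^T$ that equivariantly deformation-retracts onto $M^T$, and each $V_i$ is contained in $M\setminus M^T$ and equivariantly deformation-retracts onto an orbit $T/K_i$ with $K_i\subsetneq T$. Setting $V=V_1\cup\cdots\cup V_n$, I would prove by induction on $n$ the strengthened statement that $\mathbb{E}_T^*(W)\otimes_{\mathbb{E}^*(BT)}R=0$ for every $T$-invariant open $W\subset M\setminus M^T$ admitting a slice-type cover of size $n$. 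The base case is the previous paragraph (since $W$ then equivariantly retracts onto a single orbit $T/K$), and the inductive step follows from the equivariant Mayer--Vietoris sequence applied to a partial union, using that $W\cap V_i$ inherits a slice-type cover of size at most $n-1$. In particular $\mathbb{E}_T^*(V)\otimes R=0$ and $\mathbb{E}_T^*(U_0\cap V)\otimes R=0$, since $U_0\cap V\subset M\setminus M^T$ inherits such a cover.

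A final equivariant Mayer--Vietoris for $M=U_0\cup V$, combined with the exactness of $-\otimes R$, then yields the isomorphism
\begin{equation*}
\mathbb{E}_T^*(M)\otimes_{\mathbb{E}^*(BT)}R\ \xrightarrow{\ \sim\ }\ \mathbb{E}_T^*(U_0)\otimes_{\mathbb{E}^*(BT)}R\ \cong\ \mathbb{E}_T^*(M^T)\otimes_{\mathbb{E}^*(BT)}R,
\end{equation*}
which by construction is the map induced by $j^*$.

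The main obstacle I anticipate is the combinatorial bookkeeping in the Mayer--Vietoris induction, namely arranging the equivariant cover carefully enough (e.g.\ by refining to ``good'' covers of $M\setminus M^T$) so that finite intersections of the $V_i$ again admit slice-type covers of strictly smaller size, allowing the induction to close up. A secondary technical point is that $\mathbb{E}^*$ is being applied to the infinite-dimensional Borel construction $U\times_T ET$; the Mayer--Vietoris sequence remains valid here via finite-dimensional skeletal approximations and the Milnor $\varprojlim^1$ exact sequence, which should only require a brief remark.
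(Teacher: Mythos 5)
Your proposal is correct and follows essentially the same route as the paper: use Lemma \ref{lem:ABsupport} to see that the localized equivariant cohomology of every non-constant orbit vanishes, cover the complement of an invariant tubular neighborhood of $M^T$ by finitely many invariant open sets equivariantly retracting onto such orbits, and conclude by an inductive Mayer--Vietoris argument. Your write-up merely spells out the slice-theorem cover, the flatness of $-\otimes_{\mathbb{E}^*(BT)}R$, and the skeletal-approximation point that the paper leaves implicit.
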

\begin{proof}  We stratify the complement of an equivariant tubular neighborhood $U$ of $M^T$ by finitely many open sets which equivariantly retract onto non-constant orbits. Note that for a non-constant orbit $X$, Lemma \ref{lem:ABsupport} implies in particular that \begin{align} \mathbb{E}_{T}^*(X)\otimes_{\mathbb{E}^*(BT)} R =0. \end{align}   The result then follows from Lemma \ref{lem:MoravaBA}, Lemma \ref{lem:ABsupport}, and the Mayer-Vietoris sequence.    \end{proof}

\begin{proof}[Proof of Theorem \ref{thm:highlight}] The assumption that $\mathbb{E}_*$ is free implies by Corollary \ref{cor:randomthing} that \begin{align} \mathbb{E}_T^*(M) \cong \mathbb{E}^*(M)[[u_1,\cdots,u_m]] \cong \mathbb{E}^*(M)\otimes_{\mathbb{E}_*} \mathbb{E}^*(BT). \end{align} Hence, $\mathbb{E}_T^*(M)$ is free as a module over $\mathbb{E}^*(BT).$ By Lemma \ref{lem:localization}, the kernel of $j^*$ is a torsion module over $\mathbb{E}^*_T(pt)$. Because $\mathbb{E}^*_{T}(M)$ is free, it follows that the kernel of $j^*$ is trivial. \end{proof}

To end this subsection, we include an Atiyah--Bott localization formula for HKR theories.

\begin{thm}\label{AB-localization}
Suppose that $M$ has an almost complex structure which is preserved under the $T$-action. Index the connected components of $M^T$ by the set $I$, and for each $i \in I$, write the normal bundle of the corresponding connected component $\nu_i$. Then the pushforward on the localized cohomology 
\begin{equation}
    j_{*}:\mathbb{E}_{T}^*(M^T)\otimes_{\mathbb{E}^*(BT)} R \to \mathbb{E}_{T}^*(M)\otimes_{\mathbb{E}^*(BT)} R
\end{equation}
is an isomorphism with inverse
\begin{equation}
    \sum_{i \in I} \frac{1}{\mathit{eu}^{T}_{\mathbb{E}}}(\nu_i) j_{i}^*.
\end{equation}
\end{thm}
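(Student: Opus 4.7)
The plan is to adapt the classical Atiyah--Bott localization argument to the HKR setting. Three ingredients are needed: (i) a self-intersection formula $j_i^{*} \circ (j_i)_{*} = \mathit{eu}^{T}_{\mathbb{E}}(\nu_i) \cdot (-)$ in $\mathbb{E}^{*}_{T}$-cohomology, (ii) invertibility of $\mathit{eu}^{T}_{\mathbb{E}}(\nu_i)$ after tensoring with $R$, and (iii) the already-proven Lemma \ref{lem:localization}. Given these, a formal manipulation assembles the inverse.

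For (i), the $T$-equivariant almost complex structure on $M$ restricts to an almost complex structure on each fixed component $M^{T}_i$ and equips each normal bundle $\nu_i$ with a $T$-equivariant complex structure. Since complex-oriented cohomology theories orient complex vector bundles, applying the Thom isomorphism fiberwise over the finite-dimensional Grassmannian approximations of $BT$ used in the proof of Theorem \ref{thm:general} and passing to the limit produces equivariant Gysin maps $(j_i)_{*}$ satisfying the self-intersection formula, in which $\mathit{eu}^{T}_{\mathbb{E}}(\nu_i)$ is the restriction of the equivariant Thom class.

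For (ii), split each normal bundle into $T$-isotypic components $\nu_i = \bigoplus_k L_{i,k}$, where $L_{i,k}$ is a $T$-equivariant complex line bundle whose character $\chi_{i,k}: T \to S^1$ is nontrivial. By Whitney multiplicativity, $\mathit{eu}^{T}_{\mathbb{E}}(\nu_i) = \prod_k c_1^{T}(L_{i,k})$. Identifying $\mathbb{E}^{*}_{T}(M^{T}_i) \cong \mathbb{E}^{*}(M^{T}_i)[\![u_1,\ldots,u_m]\!]$ via Corollary \ref{cor:randomthing}, the formal group law reads
\begin{equation*}
 c_1^{T}(L_{i,k}) = c_1(\bar{L}_{i,k}) +_{L_{\mathbb{E}}} c_1^{T}(\chi_{i,k}),
\end{equation*}
where $c_1(\bar{L}_{i,k}) \in \mathbb{E}^{*}(M^{T}_i)$ is the nonequivariant Chern class of the underlying line bundle. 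Since $M^{T}_i$ is a finite-dimensional manifold, $c_1(\bar{L}_{i,k})$ has strictly positive Atiyah--Hirzebruch filtration and is therefore nilpotent. On the other hand, restricting $c_1^{T}(\chi_{i,k})$ to a circle factor $BS^1 \subset BT$ on which $\chi_{i,k}$ has nonzero weight $a$ reduces it to $[a]\cdot_{\mathbb{E}} u$, which is nonzero in $\mathbb{E}^{*}(BS^1) = \mathbb{E}_{*}[\![u]\!]$: for integral HKR theories this follows from the fact that the $p$-series has leading term $v_n u^{p^n}$ and that $[a]\cdot_{\mathbb{E}}u$ factors as $[p^k]\cdot_{\mathbb{E}}([m]\cdot_{\mathbb{E}}u)$ with $(m,p)=1$, neither factor of which vanishes. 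Thus $c_1^{T}(\chi_{i,k})$ is a nonzero element of the integral domain $\mathbb{E}^{*}(BT)$, hence a non-zero-divisor, and becomes a unit in $R$. Writing $c_1^{T}(L_{i,k}) = c_1^{T}(\chi_{i,k}) + \nu$ with $\nu$ nilpotent in $\mathbb{E}^{*}_{T}(M^{T}_i)$ and applying a terminating geometric series, $c_1^{T}(L_{i,k})$ becomes invertible after localization; so is the product $\mathit{eu}^{T}_{\mathbb{E}}(\nu_i)$.

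Finally, set $\Phi = \sum_i (\mathit{eu}^{T}_{\mathbb{E}}(\nu_i))^{-1} j_i^{*}$. The disjointness of the fixed components forces $j_i^{*} \circ (j_{i'})_{*} = 0$ for $i \neq i'$, so the self-intersection formula yields $\Phi \circ j_{*} = \mathrm{id}$. For the reverse composite, compose with $j^{*}$: the same computation gives $j^{*} \circ j_{*} \circ \Phi = j^{*}$, and since Lemma \ref{lem:localization} implies $j^{*}$ is an isomorphism on localized cohomology, we conclude $j_{*} \circ \Phi = \mathrm{id}$. The main obstacle is making precise the equivariant Thom isomorphism and self-intersection formula in the Borel-equivariant HKR setting; once these are in place, the remainder of the argument is a formal consequence of the structure developed in the preceding sections.
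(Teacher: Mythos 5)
Your argument is correct, but it follows a different route than the paper's own proof of Theorem \ref{AB-localization}: you prove invertibility of the equivariant Euler classes \emph{directly}, by decomposing $\nu_i$ into nontrivial $T$-weight pieces, using the formal group law to write the equivariant Euler class as the corresponding nonequivariant class twisted by $c_1^T(\chi_{i,k})$, and observing that the nonequivariant part is nilpotent while $c_1^T(\chi_{i,k})$ is a nonzero element of the integral domain $\mathbb{E}^*(BT)$, hence a unit in $R$; the surjectivity of $j_*\circ\Phi$ then comes from Lemma \ref{lem:localization}. The paper's proof instead avoids any direct computation with the Euler classes: it shows via the Mayer--Vietoris sequence for $(M, M\setminus M^T)$ and the support estimates of Lemma \ref{lem:ABsupport} that $\ker(j_*)$ and $\operatorname{coker}(j_*)$ are torsion over $\mathbb{E}^*(BT)$, combines this with Lemma \ref{lem:localization} to conclude that $j^*j_*$ is an isomorphism after localization, and only then \emph{deduces} the invertibility of $\mathit{eu}^T_{\mathbb{E}}(\nu_i)$ from the diagonal form of $j^*j_*$; your route is in fact the alternative argument the paper itself sketches in the remark immediately after the theorem (following Atiyah--Bott, Section 3). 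The trade-off: the paper's main argument is softer and reuses the localization machinery already in place, while yours gives an explicit, self-contained reason for the invertibility via the formal group law. Two small points to tighten in your write-up: the $T$-isotypic summands of $\nu_i$ are in general higher-rank bundles with a single character, not line bundles, so you should either invoke the splitting principle or phrase the computation in terms of Chern roots $x_j$, with $\mathit{eu}^T_{\mathbb{E}} = \prod_j (x_j +_{L_{\mathbb{E}}} c_1^T(\chi))$ (the nilpotency argument is unchanged); and the statement that the $p$-series has leading term $v_n u^{p^n}$ is specific to Morava $K$-theory---for a general integral HKR theory of height $n$ one should instead argue that the $p$-series is nonzero because its coefficient of $u^{p^n}$ is a unit modulo $\mathfrak{m}$, and that $[m]\cdot_{\mathbb{E}}u$ for $(m,p)=1$ has invertible linear coefficient, which together still give $[a]\cdot_{\mathbb{E}}u \neq 0$. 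You are also right that the only genuinely nontrivial foundational input is the equivariant Gysin map and self-intersection formula in the Borel setting, which both you and the paper treat by the same finite-dimensional approximation/complex-orientation reasoning.
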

\begin{proof} 
By the complex-orientation assumption and its compatibility with the $T$-action, we see that the pushforward in equivariant cohomology $j_*$ and the Euler class of the normal bundle $\nu_i$ are well-defined. Because for each $i \in I$, we have $j_i^* j_* (1) = \mathit{eu}^{T}_{\mathbb{E}}(\nu_i)$. Using the Mayer--Vietoris long exact sequence for the pair $(M, M \setminus M^T)$, as in Lemma \ref{AB-localization}, one can show that for the pushforward $j_*$, both $\ker(j_*)$ and $\mathrm{coker}(j_*)$ are both torsion modules over $\mathbb{E}^*(BT)$. Combining with Lemma \ref{AB-localization}, it follows that $j^* j_*$ is an isomorphism. This proves the invertibility of the Euler classes, so the statement follows accordingly.
\end{proof}

\begin{rem}
     Following \cite[Section 3]{Atiyah-Bott}, we can prove Theorem \ref{AB-localization} in a slightly different way. For the connected component $M^T_i$, we can decompose its normal bundle into a direct sum of irreducible $T$-eigenbundles
\begin{equation}
    \nu_i = \nu_i^{(1)} \oplus \cdots \oplus \nu_i^{(k)}
\end{equation}
such that for each $1 \leq \alpha \leq k$, the corresponding character $\varphi_{\alpha}: T \to U(1)$ is nontrivial. If we write $u_{\alpha} \in \mathbb{E}_{*}[\![ u_1, \dots, u_m ]\!]$ the image of the generator of $\mathbb{E}^*(BS^1)$ in $\mathbb{E}^*(BT)$ induced by $\varphi_{\alpha}$, then 
\begin{equation}
   \mathit{eu}^{T}_{\mathbb{E}}(\nu_i) = \prod_{1 \leq \alpha \leq k} (\mathit{eu}_{\mathbb{E}}(\nu_i^{(\alpha)}) +_{\mathbb{E}} u_{\alpha}),
\end{equation}
where we abuse the notation using $+_{\mathbb{E}}$ to denote the element in $\mathbb{E}_{T}^*(M^T)$ obtained by plugging in the given two elements in the formal group law associated with $\mathbb{E}^*$. The Euler class of line bundles, which coincides with the first Chern class in $\mathbb{E}^*$, is nilpotent. Indeed, for any finite-dimensional CW complex $X$, the set of isomorphism classes of complex bundles on $X$ have a one-to-one correspondence with the set of homotopy classes of maps from $X$ to $\mathbb{CP}^{\infty}$. If a line bundle $L \to X$ is classified by $f: X \to \mathbb{CP}^{\infty}$, then $\mathit{eu}_{\mathbb{E}}(L) = f^*(u)$, where $u$ is the generator of $\mathbb{E}^*(\mathbb{CP}^{\infty})$. By naturality, we know that $\mathit{eu}_{\mathbb{E}}(L)^N = f^*(u^N)$ for any $N \in \mathbb{Z}$. On the other hand, $u^N$ is the Poincar\'e dual of a complex codimension $N$ subspace in $\mathbb{CP}^{\infty}$. So, for $N \gg 1$, we can choose a homotopy of $f$ to show that $f^*(u^N) = 0$, which proves the nilpotency. Accordingly, we see that $\mathit{eu}^{T}_{\mathbb{E}}(\nu_i)$ is invertible after tensor with the fraction field $R$.
\end{rem}

\subsection{GKM theorem}
In this subsection, we specialize our discussion to the case where $\mathbb{E}^*$ is a Morava $K$-theory $K(n)$. We show that the so-called Chang--Skjelbred lemma works verbatim for $K(n)$ coefficients, and deduce the GKM calculation of equivariant cohomology as a corollary.

Let $M$ be a compact smooth $T$-manifold. Denote by $\mathcal{P}$ the set of closed subschemes $\mathcal{Z}_K:=\operatorname{supp}(\mathbb{E}^*_T(T/K)) \subset \mathrm{Spec}(\mathbb{E}_*[\![ u_1, \dots, u_m ]\!])$ where $K$ is a stabilizer of some point $x \in M\setminus M^T$ (we give $\mathcal{Z}_K$ the reduced-induced scheme structure). This is a partially ordered set, with partial order given by inclusion. Note that by Lemma \ref{lem:ABsupport} (2), the scheme $\mathcal{Z}_K$ only depends on the connected component of the identity element $K^0$.

Let $\delta'$ denote the connecting homomorphism of the $\mathbb{E}^*_{T}$-cohomology of the pair $(M, M^T)$. Given $\xi \in \mathbb{E}^*_{T}(M^T)$, write $I(\xi)$ the ideal of annihilators of of $\delta'(\xi)$. On the other hand, for each $\mathfrak{m} \in \mathcal{P}$, we write $M^{\mathfrak{m}}$ the subset of $M$ consisting of points $x \in M$ whose stabilizer group $K_x$ satisfies $\mathfrak{m} \subset \mathcal{Z}_{K_x}$. Denote the connecting homomorphism of $\mathbb{E}^*_{T}$-cohomology of the pair $(M, M^{\mathfrak{m}})$ by $\delta^{\mathfrak{m}}$. The following technical statement is proved in the same way as \cite[Lemma 2.3]{CS} and \cite[Lemma 15.8]{GKM}.

\begin{lemma}\label{lemma:CS}
    For $\xi \in \mathbb{E}^*_{T}(M^T)$, if $\delta'(\xi) \neq 0$, then the variety defined by $I(\xi)$ satisfies
    \begin{equation}
    V(I(\xi)) \subset \bigcup_{\substack{\mathfrak{m} \in \mathcal{P} \\ \delta^{\mathfrak{m}}(\xi) \neq 0}} \mathfrak{m}.
    \end{equation}
    in the scheme $\mathrm{Spec}(\mathbb{E}_*[\![ u_1, \dots, u_m ]\!])$. \qed
\end{lemma}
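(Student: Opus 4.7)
\emph{Proof plan.} The plan is to adapt the Chang--Skjelbred argument of \cite[Lemma 2.3]{CS} and \cite[Lemma 15.8]{GKM}, with Lemma \ref{lem:ABsupport} serving as the replacement for the classical orbit-support lemma in rational cohomology. Working with the contrapositive of the desired inclusion, the goal becomes the following: given a prime $\mathfrak{p} \in \mathrm{Spec}(\mathbb{E}_*[\![u_1,\dots,u_m]\!])$ which does not lie in any $\mathfrak{m} \in \mathcal{P}$ for which $\delta^{\mathfrak{m}}(\xi) \neq 0$, show that $\delta'(\xi)$ vanishes after localisation at $\mathfrak{p}$, i.e.\ that $\mathfrak{p} \notin V(I(\xi))$.

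I first organise the finite family $\{M^{\mathfrak{m}}\}_{\mathfrak{m} \in \mathcal{P}}$ into an increasing filtration $M^T = F_0 \subset F_1 \subset \cdots \subset F_N = M$ of $T$-invariant closed subsets, indexed compatibly with the partial order on $\mathcal{P}$, so that each stratum $F_k \setminus F_{k-1}$ is a union of $T$-orbits whose stabilisers $K$ all satisfy $\mathfrak{m}_k \subseteq \mathcal{Z}_K$ for a single $\mathfrak{m}_k \in \mathcal{P}$. By Lemma \ref{lem:ABsupport}, the $\mathbb{E}^*_T$-cohomology of each such orbit is supported on $\mathcal{Z}_K$, and combining over the finitely many orbit types by Mayer--Vietoris (as in the proof of Lemma \ref{lem:localization}) the relative cohomology $\mathbb{E}^*_T(F_k, F_{k-1})$ is supported on a union of closed subschemes each contained in $\mathfrak{m}_k$.

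The heart of the argument is then an ascending induction along this filtration. Starting with $\xi \in \mathbb{E}^*_T(F_0)$, at each stage I attempt to lift the class to $\mathbb{E}^*_T(F_k)_{\mathfrak{p}}$ using the localised long exact sequence of the pair $(F_k, F_{k-1})$; the obstruction lives in $\mathbb{E}^{*+1}_T(F_k, F_{k-1})_{\mathfrak{p}}$ and, via naturality of the connecting maps for the triple $(M, F_k, F_{k-1})$ and for the pair $(M, M^{\mathfrak{m}_k})$, should be identified with the localisation of $\delta^{\mathfrak{m}_k}(\xi)$. By the support calculation, this obstruction group is already zero whenever $\mathfrak{p}$ does not lie in $\mathfrak{m}_k$, so the lift exists automatically; for the remaining $k$ with $\mathfrak{p} \in \mathfrak{m}_k$, the standing hypothesis $\delta^{\mathfrak{m}_k}(\xi) = 0$ again kills the obstruction. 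After finitely many steps $\xi$ lifts to $\mathbb{E}^*_T(M)_{\mathfrak{p}}$, which by the long exact sequence of $(M, M^T)$ forces $\delta'(\xi) = 0$ in the localisation at $\mathfrak{p}$, as desired.

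The main obstacle I expect is the bookkeeping needed to identify the obstruction at the $k$th stage with the localisation of $\delta^{\mathfrak{m}_k}(\xi)$, which requires a compatibility statement between the connecting homomorphisms of the triples $(M, F_k, F_{k-1})$ and the pair $(M, M^{\mathfrak{m}_k})$ together with the inclusions $F_k \subseteq M^{\mathfrak{m}_k}$. Once this naturality is in place, the rest is formal: the orbit-wise support input, the Mayer--Vietoris and Kunneth ingredients for equivariant $\mathbb{E}^*$-cohomology, and the finiteness of the poset $\mathcal{P}$ are already in hand from the previous subsection, so the argument should run as a direct adaptation of the rational-coefficient proofs in \cite{CS} and \cite{GKM}.
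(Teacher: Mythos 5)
Your framing (pass to the contrapositive, fix a prime $\mathfrak{p}$ outside every $\mathfrak{m}$ with $\delta^{\mathfrak{m}}(\xi)\neq 0$, and kill localized obstruction groups by the support Lemma \ref{lem:ABsupport}) is the right starting point, and your case (i) is fine: if $\mathfrak{p}\notin\mathfrak{m}_k$ then $\mathbb{E}^{*+1}_T(F_k,F_{k-1})_{\mathfrak{p}}=0$ and the lift over that stage is automatic. The gap is in case (ii), i.e.\ precisely the step you dismiss as ``bookkeeping''. The obstruction at stage $k$ is $\delta_{(F_k,F_{k-1})}(\xi_{k-1})$ for the lift $\xi_{k-1}$ you chose at the previous stage; it is \emph{not} identified with $\delta^{\mathfrak{m}_k}(\xi)$. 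First, the inclusion you invoke goes the wrong way: for a filtration by closed invariant sets one must add the most special strata first, so $M^{\mathfrak{m}_k}\subseteq F_k$ (your ``$F_k\subseteq M^{\mathfrak{m}_k}$'' cannot hold in general). Naturality for the pair inclusion $(M^{\mathfrak{m}_k},M^T)\subseteq(F_k,F_{k-1})$ then only says that the obstruction \emph{restricts} to $\delta^{\mathfrak{m}_k}(\xi)$ under the map $\mathbb{E}^{*+1}_T(F_k,F_{k-1})\cong\mathbb{E}^{*+1}_T(M^{\mathfrak{m}_k},M^{\mathfrak{m}_k}\cap F_{k-1})\to\mathbb{E}^{*+1}_T(M^{\mathfrak{m}_k},M^T)$. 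The kernel of that map is governed by $\mathbb{E}^{*}_T(M^{\mathfrak{m}_k}\cap F_{k-1},M^T)$, which is supported on the walls $\mathfrak{m}_j\supsetneq\mathfrak{m}_k$; every such $\mathfrak{m}_j$ contains $\mathfrak{p}$ whenever $\mathfrak{m}_k$ does, so localization at $\mathfrak{p}$ does not kill this kernel. Hence $\delta^{\mathfrak{m}_k}(\xi)=0$ does not force the obstruction to vanish, and your induction gives no mechanism for correcting the earlier lifts so that it does. Note that the hard case is unavoidable: when $\delta'(\xi)\neq 0$ its annihilator is a proper graded ideal, so the maximal graded prime lies in $V(I(\xi))$ and in \emph{every} $\mathfrak{m}\in\mathcal{P}$; at that prime localization does nothing, several walls occur simultaneously, and your scheme would be claiming, by pure naturality, that a class extending over each $M^{\mathfrak{m}}$ separately extends over $M$ --- which is exactly the content one has to work for.

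The argument the paper has in mind (the Chang--Skjelbred/GKM route) is organized differently: for a prime $\mathfrak{p}$ one compares $M$ not with an orbit-type filtration of all of $M$ but with the single closed subspace $X_{\mathfrak{p}}=M^T\cup\bigcup_{\mathfrak{m}\ni\mathfrak{p}}M^{\mathfrak{m}}$ (the points whose isotropy support contains $\mathfrak{p}$); the stratification/Mayer--Vietoris argument of Lemma \ref{lem:localization} gives $\mathbb{E}^*_T(M,X_{\mathfrak{p}})_{\mathfrak{p}}=0$, so by the triple $(M,X_{\mathfrak{p}},M^T)$ the class $\delta'(\xi)_{\mathfrak{p}}$ is detected by the connecting map of the pair $(X_{\mathfrak{p}},M^T)$, and the remaining (and genuinely nontrivial) work is to relate that pair to the individual pairs $(M^{\mathfrak{m}},M^T)$ --- in the classical proofs this is done by a further reduction using the structure of the isotropy subtori through $\mathfrak{p}$ (via Lemma \ref{lem:ABsupport}(2) and the K\"unneth splitting in the present setting), not by identifying stratum-wise obstructions with the $\delta^{\mathfrak{m}}$'s. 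As it stands, your proposal is missing this central mechanism, so the proof does not go through.
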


Denote by $M_1 \subset M$ the set of points consisting of the union of $0$ and $1$-dimensional orbits of $T$. In particular, $M^{T} \subset M_1$. 

\begin{prop}\label{prop:6.3}
    Let $\mathbb{E}^*$ be a Morava $K$-theory. If the equivariant cohomology $\mathbb{E}^*_T(M)$ is a free module over $\mathbb{E}^*(BT)$, then the sequence
    \begin{equation}\label{eqn:exact-sequence}
        0 \longrightarrow \mathbb{E}^*_T(M) \xrightarrow{j^*} \mathbb{E}^*_{T}(M^T) \xrightarrow{\delta} \mathbb{E}^{*+1}_T(M_1, M^T)
    \end{equation}
    is exact, where $\mathbb{E}^{*+1}_T(M_1, M^T)$ denotes the relative equivariant cohomology of the pair $(M_1, M^T)$ and $\delta$ is the connecting homomorphism.
\end{prop}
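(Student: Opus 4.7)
The plan is to adapt the classical Chang--Skjelbred argument, using Lemma \ref{lem:localization} for injectivity of $j^*$ and a depth computation (enabled by the graded-field property of $K(n)_*$) for the main exactness statement. Write $\delta' : \mathbb{E}^*_T(M^T) \to \mathbb{E}^{*+1}_T(M, M^T)$ for the connecting map of the pair $(M, M^T)$, and note that $\delta = r \circ \delta'$, where $r$ is the restriction induced by the inclusion of pairs $(M_1, M^T) \hookrightarrow (M, M^T)$. For injectivity, Lemma \ref{lem:localization} shows that $j^*$ becomes an isomorphism after inverting the nonzero elements of $\mathbb{E}^*(BT)$, and since the assumed freeness of $\mathbb{E}^*_T(M)$ implies torsion-freeness over the integral domain $\mathbb{E}^*(BT)$, it follows that $j^*$ itself is injective. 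The inclusion $\operatorname{image}(j^*) \subset \ker(\delta)$ is immediate from $\delta \circ j^* = r \circ \delta' \circ j^* = 0$, so the content is to establish $\ker(\delta) \subset \ker(\delta') = \operatorname{image}(j^*)$.

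For this reverse inclusion, I would assume $\xi \in \ker(\delta)$ with $\delta'(\xi) \neq 0$ and derive a contradiction by bounding the codimension of $V(I(\xi)) \subset \operatorname{Spec}(\mathbb{E}^*(BT))$ in two incompatible ways. For the upper bound, because $K(n)_* = \mathbb{F}_p[v_n, v_n^{-1}]$ is a graded field, $\mathbb{E}^*(M^T)$ is automatically free over $\mathbb{E}_*$; the K\"unneth formula then gives $\mathbb{E}^*_T(M^T) \cong \mathbb{E}^*(M^T) \otimes_{\mathbb{E}_*} \mathbb{E}^*(BT)$, which is free over the regular local ring $\mathbb{E}^*(BT)$ of Krull dimension $m$. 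The short exact sequence
\begin{equation*}
0 \to \mathbb{E}^*_T(M) \xrightarrow{j^*} \mathbb{E}^*_T(M^T) \to Q \to 0,
\end{equation*}
with free outer terms of depth $m$, yields $\operatorname{depth}_{\mathbb{E}^*(BT)}(Q) \geq m - 1$ by the depth lemma. Combined with the standard inequality $\operatorname{codim}(\operatorname{supp}(N)) \leq \dim R - \operatorname{depth}(N)$ for finitely generated $N$ over a Cohen--Macaulay local $R$, this forces $\operatorname{supp}(Q)$ --- and hence $V(I(\xi)) \subset \operatorname{supp}(Q)$ via the $\mathbb{E}^*(BT)$-linear isomorphism $Q \xrightarrow{\sim} \operatorname{image}(\delta')$ --- to have codimension at most one.

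For the opposing bound I would invoke Lemma \ref{lemma:CS}, which gives $V(I(\xi)) \subset \bigcup_{\mathfrak{m} : \delta^{\mathfrak{m}}(\xi) \neq 0} \mathfrak{m}$. The key geometric point is that for any codimension-one $\mathfrak{m} = \mathcal{Z}_K \in \mathcal{P}$, one has $M^{\mathfrak{m}} \subset M_1$: by Lemma \ref{lem:ABsupport}(2) we may replace $K$ by the corank-one subtorus $K^0$, and a point fixed by $K^0$ either has full $T$-stabilizer (lying in $M^T$) or has stabilizer exactly $K^0$ (giving a one-dimensional $T$-orbit). By naturality of the long exact sequence, $\delta^{\mathfrak{m}}(\xi)$ is the restriction of $\delta(\xi) = 0$ along $(M^{\mathfrak{m}}, M^T) \hookrightarrow (M_1, M^T)$, so $\delta^{\mathfrak{m}}(\xi) = 0$ for every codimension-one $\mathfrak{m}$. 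Lemma \ref{lemma:CS} then places $V(I(\xi))$ inside a union of subschemes of codimension $\geq 2$, contradicting the upper bound. The main obstacle is the depth estimate on $Q$: it depends crucially on the freeness of $\mathbb{E}^*_T(M^T)$ over $\mathbb{E}^*(BT)$, which in turn uses critically that $K(n)_*$ is a graded field --- this is exactly what prevents a verbatim extension of the proposition to general HKR theories.
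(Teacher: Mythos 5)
Your strategy is sound, and in its key middle step it is genuinely different from the paper's. The endgame coincides: Lemma \ref{lemma:CS} plus the observation that a codimension-one $\mathfrak{m}\in\mathcal{P}$ forces $M^{\mathfrak{m}}\subset M_1$, so that $\delta^{\mathfrak{m}}$ factors through $\delta$ and hence $\delta^{\mathfrak{m}}(\xi)=0$; likewise your injectivity argument (Lemma \ref{lem:localization} plus torsion-freeness over the integral domain $\mathbb{E}^*(BT)$) is exactly the one used for Theorem \ref{thm:highlight}. Where you diverge is in how you force $V(I(\xi))$ to have codimension at most one: the paper follows Chang--Skjelbred and shows that $I(\xi)$ is \emph{principal}, by a divisibility argument that uses a basis of the free module $\mathbb{E}^*_T(M)$, the injectivity of $j^*$, and the UFD property of $\mathbb{E}^*(BT)$; you instead run an Atiyah--Bredon-style homological argument, using the graded-field coefficients (via K\"unneth) to get freeness of $\mathbb{E}^*_T(M^T)$ and then a depth estimate on $Q=\operatorname{coker}(j^*)$. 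Your route is more conceptual and isolates where the graded-field hypothesis enters; the paper's route needs only the freeness hypothesis on $\mathbb{E}^*_T(M)$ for the principality step, with the graded field entering through injectivity of $j^*$ and the structure of $\mathbb{E}^*(BT)$.

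However, as written the crucial inference in your upper bound is a non sequitur: from $V(I(\xi))\subset\operatorname{supp}(Q)$ and $\operatorname{codim}\operatorname{supp}(Q)\le 1$ you cannot conclude $\operatorname{codim}V(I(\xi))\le 1$, since containment of closed sets gives the \emph{opposite} inequality on codimension (a closed subset of a hypersurface may well have codimension two). The conclusion you want is true, but it must be routed through associated primes rather than supports: writing $R=\mathbb{E}^*(BT)$, the cyclic module $R\cdot\delta'(\xi)\cong R/I(\xi)$ is a submodule of $Q$, so any minimal prime $\mathfrak{p}$ over $I(\xi)$ lies in $\operatorname{Ass}(R/I(\xi))\subset\operatorname{Ass}(Q)$; the standard inequality $\operatorname{depth}(Q)\le\dim(R/\mathfrak{p})$ for every $\mathfrak{p}\in\operatorname{Ass}(Q)$, combined with your depth-lemma estimate $\operatorname{depth}(Q)\ge m-1$, gives $\operatorname{ht}(\mathfrak{p})\le 1$ and hence $\operatorname{ht}I(\xi)\le 1$, which is what the contradiction with Lemma \ref{lemma:CS} requires. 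Two smaller points: you should note why the modules involved are finitely generated over the Noetherian graded-local ring $\mathbb{E}^*(BT)$ (compactness of $M$ together with Lemmas \ref{lem:MoravaBA} and \ref{lem:ABsupport} gives this), and your parenthetical that a point of $M^{\mathfrak{m}}$ has stabilizer exactly $K^0$ or all of $T$ is not quite accurate --- the stabilizer may be any subgroup whose support contains $\mathfrak{m}$ --- but all you need is that such stabilizers have corank at most one, which does follow from Lemma \ref{lem:ABsupport}(1). With the associated-prime fix, your proof goes through.
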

\begin{proof}
    The fact that $\ker(j)^*$ is trivial follows immediately from Theorem \ref{thm:highlight} because the coefficient ring is a graded field. Because the connecting homomorphism $\delta$ factors as the composition
    \begin{equation}
        \mathbb{E}^*_{T}(M^T) \xrightarrow{\delta'} \mathbb{E}^{*+1}_T(M, M^T) \rightarrow \mathbb{E}^{*+1}_T(M_1, M^T)
    \end{equation}
    where recall that $\delta'$ is the connecting homomorphism for the pair $(M,M^T)$, the long exact sequence implies that $\delta \circ j^* = 0$. To finish the proof, suppose $\xi \in \mathbb{E}^*_{T}(M^T)$ satisfies $\delta(\xi) = 0$. To show it lies in the image of $j^*$, we need to show that $\delta'(\xi) = 0$.
    
    Suppose this is not the case. We claim that the ideal $I(\xi)$ is a principal. We follow the proof of \cite[Lemma 2.2]{CS}. Indeed, let $0 \neq a \in I(\xi)$ be an element which is not divisible by any other element and suppose $b \in I(\xi)$. Choosing a basis $\{x_i\} \subset \mathbb{E}^*_T(M)$, we can find $a_i, b_i \in \mathbb{E}^*_{T}(pt)$ such that $a\xi = j^*(\sum a_i x_i)$ and $b \xi = j^*(\sum b_i x_i)$ using the long exact sequence for the pair $(M, M^T)$. Then we know that $j^*(\sum b a_i x_i) = j^*(\sum a b_i x_i)$, so $\sum b a_i x_i = \sum a b_i x_i$ by the injectivity of $j^*$. This shows that for each $i$, we have $b a_i = a b_i$. As the $a_i$ must have no common factor, we see that $b$ is divisible by $a$, which proves the claim.

    By Lemma \ref{lemma:CS}, as $I(\xi)$ is principal, there must be one $\mathfrak{m}$ which has codimension $1$ in $T$. Because of the inclusion $M^{\mathfrak{m}} \subset M_1$, the connecting homomorphism $\delta^K$ factors as
    \begin{equation}
        \mathbb{E}^*_{T}(M^T) \xrightarrow{\delta} \mathbb{E}^{*+1}_T(M_1, M^T) \to \mathbb{E}^{*+1}_T(M^{\mathfrak{m}}, M^T),
    \end{equation}
    which tells us that $\delta(\xi) \neq 0$, leading to a contradiction.
\end{proof}

\begin{proof}[Proof of Theorem \ref{thm:GKM}:]
This follows from the proof of \cite[Theorem 7.2]{GKM}, which we reproduce for completeness. Because $\mathbb{E}^*_T(M)$ is a free module over $\mathbb{E}^*(BT)$, using the exact sequence \eqref{eqn:exact-sequence}, it suffices to determine the kernel of $\delta: \mathbb{E}^*_T(M^T) \to \mathbb{E}^{*+1}_T(M_1, M^T)$. Given the complement of poles $E_j$, by definition, its stabilizer group is $\ker(\Theta_j)$. Therefore, 
\begin{equation}
\mathbb{E}^*_{T}(E_j) \cong \mathbb{E}^*_{T}(T / \ker(\Theta_j)) \cong \mathbb{E}^*_{\ker(\Theta_j)}(pt).
\end{equation}
Choosing a $T$-invariant open cover of $\ov{E}_j = U_j \cup V_j$ where $U_j = \ov{E}_j \setminus \{ x_{j_0} \}$ and $V_j = \ov{E}_j \setminus \{ x_{j_{\infty}} \}$. Then the corresponding Mayer--Vietoris long exact sequence splits into short exact sequences because all the cohomology groups concentrate in even degrees. Furthermore, we have a commutative diagram
\begin{equation}
     \begin{tikzcd}
0 \arrow[r] & \mathbb{E}^*_{T}(\ov{E}_j)  \arrow[r] \arrow[d, "\cong"] & \mathbb{E}^*_{T}(U_j) \oplus \mathbb{E}^*_{T}(V_j) \arrow[r] \arrow[d, "\cong"]       & \mathbb{E}^*_{T}(E_j) \arrow[r] \arrow[d, "\cong"]                       & 0 \\
0 \arrow[r] & \mathbb{E}^*_{T}(\ov{E}_j)  \arrow[r]                    & \mathbb{E}^*_{T}(x_{j_{\infty}}) \oplus \mathbb{E}^*_{T}(x_{j_0}) \arrow[r, "\delta"] & {\mathbb{E}_{T}^{*+1}(\ov{E}_j, \{x_{j_0}, x_{j_{\infty}} \})} \arrow[r] & 0,
\end{tikzcd}
\end{equation}
which tells us that $\delta$ agrees with the map
\begin{equation}
    (f_{j_0}, f_{j_{\infty}}) \mapsto f_{j_0} |_{\mathbb{E}^*_{\ker(\Theta_j)}} - f_{j_{\infty}} |_{\mathbb{E}^*_{\ker(\Theta_j)}}.
\end{equation}
Ranging over all $j$, the theorem is proved.
\end{proof}

\begin{rem}
    In view of the calculation Lemma \ref{lem:MoravaBA} (2), one can see that for an invariant $2$-sphere with orbit type $T/K$ such that in the decomposition $K = A \times K^0$, the finite group component $A$ is has order greater than $1$, the formula in Theorem \ref{thm:GKM} depends on the height for the cohomology theory $\mathbb{E}^*$. We thank Ivan Smith for a related question.
\end{rem}

\bibliography{ref}

\bibliographystyle{amsalpha}

\end{document}